\let\Hungarian\H
\newtheoremstyle{mystyle}%
{\topsep}
{\topsep}
{\itshape\color{red}}
{}
{\bfseries\color{red}}
{.}
{.5em}
{}
\newtheoremstyle{break}
  {\topsep}{\topsep}%
  {\itshape}{}%
  {\bfseries}{}%
  {\newline}{}%
\newtheoremstyle{break1}
  {\topsep}{\topsep}%
  {}{}%
  {\bfseries}{}%
  {\newline}{}%
\theoremstyle{break} \newtheorem{theorem}{Theorem}[section]
\theoremstyle{break} 
\theoremstyle{break} \newtheorem{remark}[theorem]{Remark}
\theoremstyle{break} \newtheorem{remarks}[theorem]{Remarks}
\theoremstyle{break} 
\theoremstyle{break1} \newtheorem*{definition}{Definition} 
\theoremstyle{break} \newtheorem{lemma}[theorem]{Lemma}
\theoremstyle{break} \newtheorem{corollary}[theorem]{Corollary}
\theoremstyle{break} 
\theoremstyle{break} 
\theoremstyle{break} \newtheorem{problem}[theorem]{Problem}
\theoremstyle{mystyle} \newtheorem{problem1*}{Problem}
\theoremstyle{break} 
\theoremstyle{break} \newtheorem{proposition}[theorem]{Proposition}
\theoremstyle{break} 
\numberwithin{equation}{section}
\newcommand{\hide}[1]{}
\newcommand{\N}{{\mathbb{N}}}
\newcommand{\R}{{\mathbb{R}}}
\newcommand{\D}{{\mathbb{D}}}
\newcommand{\C}{{\mathbb{C}}}
\renewcommand{\H}{{\mathbb{H}}}
\def\Aut{\mathop{{\rm Aut}}}
\def\A{\mathcal{A}(\D)}
\def\si{\mathop{{\rm \simeq}}}
\def\blfootnote{\xdef\@thefnmark{}\@footnotetext}
\begin{document}

\vspace*{-2cm}

{\begin{center}
{\Large \bf Strict Wick--type deformation quantization on Riemann surfaces:\\[2mm] Rigidity and Obstructions}
\end{center}}

\medskip
\renewcommand{\thefootnote}{\arabic{footnote}}
\begin{center}
{\large Daniela Kraus, Oliver Roth, Sebastian Schlei{\ss}inger and Stefan Waldmann}\\[1mm]
\today
\end{center}
\medskip

\begin{abstract}
  Let $X$ be a hyperbolic Riemann surface. We study a convergent Wick--type star product $\star_X$ on $X$  which is induced by the canonical convergent star product~$\star_{\D}$ on the unit disk $\D$  via Uniformization Theory. While by construction, the resulting Fr\'echet algebras $(\mathcal{A}(X),\star_X)$ are strongly isomorphic for conformally equivalent Riemann surfaces, our work exhibits additional severe topological obstructions. In particular, we show that the  Fr\'echet algebra $(\mathcal{A}(X),\star_X)$ degenerates if and only if the connectivity of~$X$ is at least~$3$, and  $(\mathcal{A}(X),\star_X)$ is noncommutative if and only if $X$ is simply connected. We also explicitly determine the algebra $\mathcal{A}_X$ and the star product $\star_X$ for the intermediate case of  doubly connected Riemann surfaces $X$. As a perhaps surprinsing result, we  deduce that two such Fr\'echet algebras are strongly isomorphic if and only if either both Riemann surfaces are conformally equivalent to an (not neccesarily the same)  annulus or both are conformally equivalent to a punctured disk.
  \end{abstract}

\section{Introduction}

 \blfootnote{2020 \textit{Mathematics Subject Classification.} Primary 30F45, 30F35, 53D55; Secondary  53A55}
        \blfootnote{\textit{Key words.} Riemann surfaces, Fuchsian groups, convergent star product}
For a complex manifold $\mathbb{M}$ we denote by $\mathcal{H}(\mathbb{M})$  the set of all
complex--valued holomorphic functions on $\mathbb{M}$. Equipped with  the standard compact--open topology,  $\mathcal{H}(\mathbb{M})$  becomes a Fr\'echet space. In this note, we are interested in
 the particular case $\mathbb{M}=\Omega$, where
$$ \Omega:=\hat{\C}^2 \setminus \left( \left\{ (z,w) \in \C^2 \, : \,
    zw=1\right\}
\cup \{ (0,\infty) \} \cup \{(\infty,0)\} \right) \, , $$
and $\hat{\C}:=\C \cup \{\infty\}$ denotes the Riemann sphere. Our interest in the
Fr\'echet space $\mathcal{H}(\Omega)$ and its function--theoretic properties comes from the insight that 
$\mathcal{H}(\Omega)$  arises naturally in problems
related to strict deformation
quantization of the open unit disk $\D=\{z \in \C \, : \, |z|<1\}$, see \cite{BDS,CGR} and in particular
\cite{BW,KRSW}. In fact, the main result\footnote{In
  \cite{KRSW}  the more general situation of the unit ball in $\C^n$ is  considered. We restrict ourselves to the case $n=1$ since in this
  paper we are
  primarly interested in star products on Riemann surfaces.} of \cite{KRSW} shows that the largest
Fr\'echet algebra $(\mathcal{A}(\D),\star_{\D})$ of all real--analytic functions on $\D$ 
 generated via phase space reduction
from the polynomials in~$z$ and $\overline{z}$
such that the canonical formal star product $\star_{\D}$ of Wick--type (see \cite{CGR}) is continuous
 can solely be described in terms of $\mathcal{H}(\Omega)$ and its natural compact--open topology:

\begin{theorem}[\cite{KRSW}] \label{thm:KRSW}
   $$ \A=\big\{f : \D \to \C \, | \,  f(z)=F(z,\overline{z}) \text{ for all } z \in
     \D \text{ for some } F \in \mathcal{H}(\Omega) \big\}\, . $$
Moreover, the algebra $(\A,\star_{\D})$  endowed with the topology inherited from the topology of locally uniform convergence in $\mathcal{H}(\Omega)$ is a  Fr\'echet algebra.
\end{theorem}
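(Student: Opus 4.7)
The plan is to split the proof into three steps: a lifting to two complex variables, an explicit convergence estimate, and a maximality argument.

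\textbf{Step 1 (Lifting).} Every real-analytic function $f:\D\to\C$ extends uniquely to a holomorphic function $F(z,w)$ on some connected open neighborhood of the antidiagonal $\{(z,\overline{z}):z\in\D\}\subset\C^2$ satisfying $F(z,\overline{z})=f(z)$. In this two-variable picture the monomials $z^j\overline{z}^k$ lift to the polynomials $z^jw^k$, which visibly belong to $\mathcal H(\Omega)$. The excluded set is forced by the structure of $\star_\D$: the canonical Wick product is built from a Bergman-type reproducing kernel with pole along $\{zw=1\}$, and the two isolated points $(0,\infty)$ and $(\infty,0)$ are precisely the places where the closure of $\{zw=1\}$ in $\hat\C^2$ degenerates, so that $\Omega$ is the largest open subset of $\hat\C^2$ consistent with the construction.

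\textbf{Step 2 (Convergence and continuity).} In the two-variable picture the Wick product takes the explicit bilinear-series form
\[
F\star_\D G \;=\; \sum_{k=0}^{\infty} c_k\,(1-zw)^{2k}\,\partial_w^k F\cdot\partial_z^k G,
\]
with coefficients $c_k$ decaying factorially via Pochhammer symbols depending on the deformation parameter. To conclude that $(\mathcal H(\Omega),\star_\D)$ is a Fréchet algebra in the compact-open topology, the task is absolute convergence and joint continuity on compacta. Given $K\Subset\Omega$, choose $K'$ with $K\Subset\mathrm{int}(K')\Subset\Omega$; Cauchy estimates yield $\|\partial_w^k F\|_K\le C^k k!\,\|F\|_{K'}$, while $(1-zw)^{2k}$ is uniformly bounded on $K$. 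Combined with the factorial decay of $c_k$, this produces an estimate $\|F\star_\D G\|_K\le \tilde C\,\|F\|_{K'}\|G\|_{K'}$ with $\tilde C$ independent of $F,G$. I expect this estimate to be the main technical obstacle, since the $k!$ growth from Cauchy bounds must be absorbed precisely by the factorial decay of $c_k$, while one simultaneously controls the behaviour of $(1-zw)^{2k}$ near the excluded hypersurface.

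\textbf{Step 3 (Maximality).} Since $\Omega$ is Stein and the polynomials $\C[z,w]$ are dense in $\mathcal H(\Omega)$ by a Runge-type argument, $\mathcal H(\Omega)$ is the Fréchet completion of $\C[z,w]$ in the compact-open seminorms. To see that this recovers the \emph{largest} admissible algebra $\A$, I would argue that any Fréchet topology on $\C[z,\overline{z}]$ which makes $\star_\D$ continuous and extends to a space of real-analytic functions on $\D$ is, via testing against reproducing-kernel-type elements of the form $1/(1-zw_0)^\alpha$ for $w_0\in\D$, forced to dominate the compact-open seminorms pulled back from $\Omega$; the corresponding completion then embeds into $\mathcal H(\Omega)$. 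Closedness under $\star_\D$ and the Fréchet algebra property then follow directly from the estimate of Step 2 together with completeness of $\mathcal H(\Omega)$.
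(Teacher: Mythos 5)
This theorem is not proved in the paper at all: it is imported verbatim from \cite{KRSW}, where the identification of $\A$ with $\mathcal{H}(\Omega)$ and the continuity of $\star_{\D}$ occupy most of that article. So your proposal has to be judged as a standalone proof, and as such it has genuine gaps at its two load-bearing points. The most serious one is Step 2. Write $M:=\sup_K|1-zw|$ and let $d$ be the polydisc radius available for the Cauchy estimate inside $K'\subset\Omega$. Your bound gives a term of size roughly $\frac{c_k}{k!}\,(k!)^2\,M^{2k}d^{-2k}\|F\|_{K'}\|G\|_{K'}$; since $c_k(\hbar)/k!\sim (k!)^{-2}$ up to polynomial factors, the two factorials cancel exactly and you are left with a geometric series of ratio $(M/d)^2$. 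This ratio is \emph{not} less than $1$ for general $K\Subset\Omega$: e.g.\ for $K\ni(z,0)$ with $|z|=R$ large one has $|1-zw|=1$ while the admissible Cauchy radius in $w$ is at most $1/R$, since $F(z,\cdot)$ is only guaranteed holomorphic for $w\neq 1/z$; the same failure occurs for any $K$ approaching $\{zw=1\}$. So local Cauchy estimates cannot absorb $(1-zw)^{2k}$, and this is precisely why the proof in \cite{KRSW} does not use them: it expands $F$ in the basis $f_{p,q}(z,w)=z^pw^q/(1-zw)^{\max\{p,q\}}$ of $\mathcal{H}(\Omega)$ (cf.\ the remark on elliptic groups in Section 4 of the present paper), exploits the super-geometric decay of the coefficients forced by holomorphy on all of $\Omega$ including the points at infinity, and estimates the structure constants of $\star_{\D}$ in that basis. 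Your argument would at best give convergence on a small neighbourhood of the antidiagonal, not membership of $F\star_{\D}G$ in $\mathcal{H}(\Omega)$.

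Step 3 has a gap of a different kind: the assertion that any Fr\'echet topology making $\star_{\D}$ continuous on $\C[z,\overline z]$ must dominate the compact-open seminorms pulled back from $\Omega$ \emph{is} the maximality statement of the theorem, and ``testing against reproducing-kernel-type elements'' is a heuristic, not an argument (those elements need not lie in the completion one starts with, and one must also show the completion is an algebra of genuine functions on $\D$, which is where the excluded set $\{zw=1\}\cup\{(0,\infty)\}\cup\{(\infty,0)\}$ is actually derived, rather than ``read off'' from a pole of the kernel as in your Step 1). In short: the skeleton (lift to two variables, explicit series, seminorm estimates, maximality) matches the known strategy, but the two technical cores --- the convergence/continuity estimate and the maximality argument --- are not supplied by what you wrote, and the specific estimate you propose provably fails.
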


We note that for each $f \in \A$ there is a unique $F \in \mathcal{H}(\Omega)$ such that $f(z)=F(z,\overline{z})$ for all $z\in \D$. This is a consequence of the identity principle, see e.g.~\cite[p.~18]{Range}.

\medskip

The point of departure of this paper is the \textit{conformal invariance} of
the Fr\'echet algebra 
$(\A,\star_{\D})$, that is,  its invariance w.r.t.~precomposition with the
elements of the group  $\Aut(\D)$  of  conformal automorphisms (biholomorphic
selfmaps) of $\D$. For later reference, we  state this crucial property explicitly:

\begin{proposition}[Conformal invariance of the Fr\'echet algebra $(\A,\star_{\D})$] \label{prop:1}
  Let $f,g \in \A$ and $\varphi \in \Aut(\D)$. Then $f \circ \varphi \in A(\D)$ and $(f \circ \varphi) \star_{\D} (g \circ \varphi)= (f \star_{\D} g) \circ \varphi$.
  \end{proposition}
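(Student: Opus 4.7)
The proof has two parts: (i) $\mathcal{A}(\D)$ is closed under precomposition with $\varphi \in \Aut(\D)$, and (ii) the multiplicative identity $(f\circ\varphi)\star_{\D}(g\circ\varphi) = (f\star_{\D} g)\circ\varphi$. I sketch each.

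For (i), I lift $\varphi$ to a biholomorphism of $\Omega$ and then apply Theorem~\ref{thm:KRSW}. Writing $\varphi(z) = e^{i\theta}(z-a)/(1-\bar a z)$ with $|a|<1$, I introduce the ``complex-conjugated'' M\"obius transformation $\varphi^\#(w) := e^{-i\theta}(w-\bar a)/(1-aw)$, chosen so that $\overline{\varphi(z)} = \varphi^\#(\bar z)$ for $z\in\D$, and consider the product map $\Theta(z,w) := (\varphi(z), \varphi^\#(w))$ on $\hat{\C}^2$. A direct calculation gives
\[
   \varphi(z)\,\varphi^\#(w) - 1 \;=\; \frac{(1 - |a|^2)(zw - 1)}{(1 - \bar a z)(1 - aw)},
\]
so $\Theta$ preserves the hypersurface $\{zw = 1\}\subset\C^2$, and a short check verifies that $\Theta$ maps the corner points $(0,\infty)$ and $(\infty,0)$ into the closure of $\{zw=1\}$ in $\hat{\C}^2$ (in fact, they are permuted among themselves when $a=0$ and sent onto $\{zw=1\}\cap\C^2$ when $a\ne 0$). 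Running the same argument for $\Theta^{-1}$ shows that $\Theta|_{\Omega}$ is a biholomorphism of $\Omega$. Given $f\in\mathcal{A}(\D)$ with representative $F\in\mathcal{H}(\Omega)$, the function $G := F\circ\Theta \in \mathcal{H}(\Omega)$ satisfies $G(z,\bar z) = F(\varphi(z),\overline{\varphi(z)}) = (f\circ\varphi)(z)$, so Theorem~\ref{thm:KRSW} yields $f\circ\varphi\in\mathcal{A}(\D)$.

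For (ii), the $\Aut(\D)$-invariance of $\star_{\D}$ is built into its construction as a Wick-type star product associated with the Poincar\'e K\"ahler structure on $\D$. Each bidifferential cochain $C_k$ in the expansion $f\star_{\D} g = \sum_{k\ge 0} C_k(f,g)$ is assembled from $\Aut(\D)$-invariant K\"ahler data together with the Wick normal-ordering prescription, and therefore satisfies $C_k(f\circ\varphi, g\circ\varphi) = C_k(f,g)\circ\varphi$; summing in $\mathcal{A}(\D)$ using the continuity of $\star_{\D}$ from Theorem~\ref{thm:KRSW} yields the claimed identity. Equivalently, one invokes the equivariant reduction construction of $\star_{\D}$ in \cite{KRSW, CGR}, in which $\Aut(\D)$ acts equivariantly throughout. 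The principal obstacle lies here: one must either quote the general invariance result for Wick star products on $\Aut$-homogeneous K\"ahler manifolds or carefully trace the $\Aut(\D)$-equivariance through the reduction of \cite{KRSW, CGR}. Step (i), by contrast, is a self-contained computation on $\hat{\C}^2$ whose only delicate point is the bookkeeping at the two corner points of $\hat{\C}^2$.
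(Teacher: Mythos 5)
Your proposal is correct and follows essentially the same route as the paper: for the set--invariance you lift $\varphi$ to the biholomorphism $\Theta(z,w)=(\varphi(z),\varphi^{\#}(w))$ of $\Omega$, which is exactly the paper's $T_\varphi(z,w)=(\varphi(z),1/\varphi(1/w))$ since $\varphi^{\#}(w)=1/\varphi(1/w)$, and then apply Theorem~\ref{thm:KRSW}. For the multiplicative identity both you and the paper defer to the $\Aut(\D)$--equivariance built into the construction of $\star_{\D}$ in \cite{KRSW}.
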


We point out that the star product $\star_{\D}$ depends on a complex deformation parameter $\hbar$, which plays the role of Planck's constant, and can take any value in the deformation domain $$\mathscr{D}:=\C \setminus \{0,-1,-1/2,-1/3, \ldots\} \, .$$ Moreover, $\star_{\D}$ depends holomorphically on $\hbar \in \mathscr{D}$, see \cite[Theorem 4.1]{KRSW}\footnote{In \cite{KRSW} the deformation domain is $\C \setminus \{0,-1/2,-1/4,-1/6,\ldots\}$ which is just a simple scaled version of~$\mathscr{D}$.}. In most of our results, we hold $\hbar \in \mathscr{D}$ fixed and simply write $\star_{\D}$. In exceptional cases, when  dependence on~$\hbar$ is essential, we use the notation $\star_{\D,\hbar}$ instead. 

\medskip
As suggested in \cite{KRSW}   the conformal invariance  of $(\A,\star_{\D})$ makes it possible to transfer the star product $\star_{\D}$ from the unit disk $\D$ to any other
   Riemann surface $X$ which is covered by the unit disk, that is, to any
   Riemann surface of hyperbolic type.  We emphasize that, with one exception, in the
   following we will merely use the conformal invariance of the star product,
   but not how it is explicitly defined. We therefore refrain for the moment from discussing any explicit form of the star product. The exception occurs for doubly connected Riemann surfaces. In this case we will make use of an explicit formula for  $\star_{\D}$ which has recently been obtained in \cite{HeinsMouchaRoth1}  and  \cite{SchmittSchoetz2022}. See the discussion in Section~\ref{sec:Doublyconnected} for details.
   

    \begin{definition}[Convergent Wick--star product on hyperbolic Riemann surfaces]
      Let $X$ be a hyperbolic Riemann surface and  $\pi  : \D \to X$ a universal covering projection. Then
      $$ \mathcal{A}(X):=\left\{ f :   X \to \C \, | \, f \circ \pi \in
        \A \right\}\,  $$
      and for any $f,g \in \mathcal{A}(X)$ we define $f \star_X g \in \mathcal{A}(X)$ by 
      $$\left( f \star_X g \right) \circ \pi:= (f \circ \pi) \star_{\D} (g
      \circ \pi) \, .$$
        
      Moreover, we equip $\mathcal{A}(X)$ with the topology inherited from the topology of the Fr\'echet algebra $(\mathcal{A}(\D),\star_{\D})$. In particular,  a sequence $(f_n)$ in $\mathcal{A}(X)$ converges in the $\mathcal{A}(X)$--topology  to $f \in \mathcal{A}(X)$ if and only if $(f_n \circ \pi) \subseteq \mathcal{A}(\D)$ converges to $f\circ\pi \in \mathcal{A}(\D)$ in the $\mathcal{A}(\D)$--topology.
                  \end{definition}
\begin{remark} \label{rem:confinv}
      By conformal invariance of $(\A,\star_{\D})$, the actual choice of the universal
      covering $\pi : \D \to X$ is immaterial, and  the set $\mathcal{A}(X)$ as well as the product $\star_X$ are well--defined objects
      depending only on the Riemann surface $X$. Moreover, convergence of a sequence $(f_n) \subseteq \mathcal{A}(X)$ in the $\mathcal{A}(X)$--topology does not depend on the choice of $\pi$ either.   In addition, the star product $\star_X : \mathcal{A}(X) \times \mathcal{A}(X) \to \mathcal{A}(X)$ is continuous in the $\mathcal{A}(X)$--topology and $(\mathcal{A}(X),\star_X)$ is a Fr\'echet algebra.  These facts are not difficult to prove, but for convenience we will supply the details in Section \ref{sec:proofsnew}.  Since the star product $\star_{\D}$ depends on the choice of the deformation parameter $\hbar$,  the star product $\star_{X}$ on $X$ also depends on $\hbar$, and we write $\star_{\hbar,X}$ whenever the dependence on $\hbar$ is relevant.  However, the set $\mathcal{A}(X)$ is independent of $\hbar$.
\end{remark}

The main purpose of this note is to describe the Fr\'echet algebras
$(\mathcal{A}(X),\star_X)$ in an explicit way.
The following preliminary observation shows that $(\mathcal{A}(X),\star_X) $ depends
on the conformal type of $X$. Recall that two Riemann surfaces $X$ and $Y$ are
called conformally equivalent, if there is~a conformal (biholomorphic) map
$\Psi : Y \to X$.

      \begin{proposition}[Conformal invariance of the Fr\'echet algebra $(\mathcal{A}(X),\star_X)$] \label{prop:confinv}
Let $X$ and $Y$  be  hyperbolic Riemann surfaces. Then any conformal map
$\Psi : Y \to X$ induces  a Fr\'echet algebra isomorphism
$\Psi^* : (\mathcal{A}(X),\star_{X,\hbar}) \to (\mathcal{A}(Y),\star_{Y,\hbar}) $ defined by 
$$ \Psi^*(f)(z)= f(\Psi(z)) \, , \qquad f \in \mathcal{A}(X), \, z \in Y \, .$$
\end{proposition}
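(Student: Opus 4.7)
The plan is to reduce everything to the already established conformal invariance of $(\mathcal{A}(\D),\star_\D)$ (Proposition~\ref{prop:1}) by lifting $\Psi$ to the universal covers. Fix universal covering projections $\pi_X : \D \to X$ and $\pi_Y : \D \to Y$. Since $\Psi : Y \to X$ is biholomorphic, $\Psi \circ \pi_Y : \D \to X$ is again a holomorphic universal covering of $X$, so by the standard lifting theorem (the target being simply connected) there is a holomorphic map $\tilde{\Psi} : \D \to \D$ with $\pi_X \circ \tilde{\Psi} = \Psi \circ \pi_Y$. Applying the same to $\Psi^{-1}$ and using uniqueness of lifts up to deck transformations shows that $\tilde{\Psi}$ has a holomorphic inverse, hence $\tilde{\Psi} \in \Aut(\D)$. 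This is really the only non-formal step of the proof.

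Once $\tilde{\Psi}$ is available, the algebraic content follows by unwinding definitions. For $f \in \mathcal{A}(X)$,
$$ (\Psi^* f) \circ \pi_Y \,=\, f \circ \Psi \circ \pi_Y \,=\, (f \circ \pi_X) \circ \tilde{\Psi}, $$
and the right-hand side lies in $\mathcal{A}(\D)$ by Proposition~\ref{prop:1}, so $\Psi^* f \in \mathcal{A}(Y)$. Using the same identity for $f\star_X g$ and applying the conformal invariance of $\star_\D$ on $\D$ yields
\begin{align*}
\bigl(\Psi^*(f \star_X g)\bigr) \circ \pi_Y
&= \bigl((f \circ \pi_X) \star_\D (g \circ \pi_X)\bigr) \circ \tilde{\Psi} \\
&= \bigl((f \circ \pi_X) \circ \tilde{\Psi}\bigr) \star_\D \bigl((g \circ \pi_X) \circ \tilde{\Psi}\bigr) \\
&= \bigl((\Psi^* f) \star_Y (\Psi^* g)\bigr) \circ \pi_Y,
\end{align*}
and surjectivity of $\pi_Y$ gives $\Psi^*(f \star_X g) = (\Psi^* f)\star_Y (\Psi^* g)$. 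Bijectivity is obtained by repeating the construction for $\Psi^{-1}$, which produces $(\Psi^{-1})^*$ as a two-sided inverse to $\Psi^*$.

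It remains to check that $\Psi^*$ and its inverse are continuous, but this is essentially built into the definition of the $\mathcal{A}(X)$- and $\mathcal{A}(Y)$-topologies: $f_n \to f$ in $\mathcal{A}(Y)$ means $f_n \circ \pi_Y \to f \circ \pi_Y$ in $\mathcal{A}(\D)$, and the displayed identity shows that this is equivalent to $(f_n \circ \pi_X) \circ \tilde{\Psi} \to (f \circ \pi_X) \circ \tilde{\Psi}$, which via Theorem~\ref{thm:KRSW} (identifying $\mathcal{A}(\D)$ with $\mathcal{H}(\Omega)$ and noting that precomposition with $\tilde{\Psi}$ acts continuously on $\mathcal{H}(\Omega)$ in the compact-open topology) amounts to $f_n\circ\pi_X \to f\circ\pi_X$, i.e.\ to convergence of $\Psi^* f_n$ to $\Psi^* f$ in the pushed-back topology on $\mathcal{A}(X)$. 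The expected main obstacle is the production of the automorphism lift $\tilde{\Psi} \in \Aut(\D)$; once that is secured, every other assertion is a bookkeeping identity pulled back through $\pi_Y$ together with Proposition~\ref{prop:1}.
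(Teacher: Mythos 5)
Your proof is correct. The only substantive difference from the paper's argument is organizational: the paper fixes a single covering $\pi:\D\to Y$ and observes that $\Psi\circ\pi:\D\to X$ is itself a universal covering of $X$, so the claim follows at once from the fact (Remark \ref{rem:confinv}) that $\mathcal{A}(X)$, $\star_X$ and the topology do not depend on the choice of covering --- no explicit lift of $\Psi$ is ever produced. You instead fix two coverings $\pi_X$, $\pi_Y$, construct the lift $\tilde{\Psi}\in\Aut(\D)$ with $\pi_X\circ\tilde{\Psi}=\Psi\circ\pi_Y$, and feed it into Proposition \ref{prop:1} directly. Since the covering--independence in Remark \ref{rem:confinv} is itself proved via exactly such a lift, the two routes are mathematically equivalent; yours is marginally more self-contained, and your continuity discussion is more explicit than the paper's one-line remark, while the paper's version is shorter because it delegates the lift to the remark. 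Two small slips, neither affecting validity: the hypothesis of the lifting theorem you need is that the \emph{domain} of $\Psi\circ\pi_Y$ (namely $\D$) is simply connected, not ``the target''; and the last sentence of your continuity paragraph has the direction garbled --- the chain of equivalences shows that $f_n\to f$ in $\mathcal{A}(X)$ if and only if $\Psi^*f_n\to\Psi^*f$ in $\mathcal{A}(Y)$, which is what gives continuity of $\Psi^*$ and of its inverse.
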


The isomorphism $\Psi^*$ in Proposition \ref{prop:confinv}  does not depend on the deformation parameter $\hbar$, so $\mathcal{A}(X)$ and $\mathcal{A}(Y)$  are strongly isomorphic in the following sense.

\begin{definition}[Strong isomorphy]
  Let $X$ and $Y$ be Riemann surfaces. We call $\mathcal{A}(X)$ and $\mathcal{A}(Y)$ \textit{strongly isomorphic} if there is a map $T : \mathcal{A}(X) \to \mathcal{A}(Y)$ such that $T$ is a Fr\'echet algebra isomorphism between $(\mathcal{A}(X),\star_{X,\hbar})$ and $(\mathcal{A}(Y),\star_{Y,\hbar})$ for every $\hbar \in \mathscr{D}$.
In this case, we write  $\mathcal{A}(X) \si \mathcal{A}(Y)$. 
  \end{definition}

 We can now state  the main result of this paper.

\begin{theorem} \label{thm:main}
  Let $X$ be a hyperbolic Riemann surface. Then the following hold.
  \begin{itemize}
  \item[(a)] $X$ is conformally equivalent to $\D$ if and only if
    $$\mathcal{A}(X) \si \mathcal{A}(\D)\, .$$
     \item[(b)] $X$  is conformally equivalent to some  annulus $A_R=\{z \in \C
       \,:\, 1/R<|z|<R\}$, $R>1$, if and only if $\mathcal{A}(X) \si \mathcal{A}(A_R)$. In this case,
       $$\mathcal{A}(X) \si  \mathcal{A}(A_{R}) \quad \text{ for every } R>1\, .$$
       
    \item[(c)] $X$ is conformally  equivalent to the punctured disk $\D^*:=\D
      \setminus \{0\}$ if and only   $$\mathcal{A}(X) \si \mathcal{A}(\D^*) \, .$$
      \item[(d)]  In all other cases,  $\mathcal{A}(X)$ consists only of constant functions.
    \end{itemize}
  \end{theorem}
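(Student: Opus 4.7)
The forward directions in (a)--(c) are immediate from Proposition~\ref{prop:confinv}, since the isomorphism $\Psi^*$ constructed there is independent of the deformation parameter $\hbar$. For the converses and for (d), I would invoke the Uniformization Theorem: every hyperbolic Riemann surface has the form $X = \D/\Gamma$ for a torsion-free Fuchsian group $\Gamma \subset \Aut(\D)$, and the definition of $\mathcal{A}(X)$ identifies it with the $\Gamma$-invariant subalgebra $\mathcal{A}(\D)^\Gamma$. A torsion-free Fuchsian group falls into exactly four conjugacy classes---trivial, infinite cyclic generated by a hyperbolic element, infinite cyclic generated by a parabolic element, or non-abelian (the abelian non-cyclic case is ruled out because a discrete abelian subgroup of $\Aut(\D)$ of rank at least two would have to contain elliptic elements)---and these match (a)--(d) exactly. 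It remains to compute $\mathcal{A}(\D)^\Gamma$ in each case and verify that the four resulting Fr\'echet algebras are pairwise not strongly isomorphic.

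The unifying device is the complexification of the $\Gamma$-action. By Theorem~\ref{thm:KRSW}, each $f \in \mathcal{A}(\D)$ lifts to a unique $F \in \mathcal{H}(\Omega)$ with $f(z) = F(z,\bar z)$. If $\gamma \in \Aut(\D)$ and $\gamma^\#$ denotes the M\"obius transformation obtained from $\gamma$ by conjugating its coefficients, a direct calculation shows that $(\gamma, \gamma^\#)$ preserves the hypersurface $\{zw=1\}$ and therefore acts as a biholomorphism of $\Omega$; moreover, the identity principle upgrades $f \circ \gamma = f$ on the totally real locus $\{(z,\bar z) : z \in \D\}$ to $F \circ (\gamma, \gamma^\#) = F$ on all of $\Omega$. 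Thus determining $\mathcal{A}(\D)^\Gamma$ becomes the problem of determining holomorphic invariants of this induced action on $\Omega$.

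For the hard case (d), choose two independent hyperbolic elements $\gamma_1, \gamma_2 \in \Gamma$. The complexified group $\langle (\gamma_1,\gamma_1^\#), (\gamma_2,\gamma_2^\#)\rangle$ acts non-elementarily on $\Omega$, with orbits accumulating on an uncountable perfect subset of $\partial\D \times \partial\D$ that lies inside $\Omega$ (in particular avoiding $\{zw=1\}$). Since $F$ extends holomorphically to an open neighbourhood of such accumulation points, the boundary accumulation together with the invariance $F \circ (\gamma,\gamma^\#) = F$ should force $F$ to be locally constant, hence constant. For the cyclic cases (b) and (c), the explicit formulae for $\star_\D$ from \cite{HeinsMouchaRoth1} and \cite{SchmittSchoetz2022} invoked in Section~\ref{sec:Doublyconnected} would give a description under which a $\Gamma$-invariant element acquires a Laurent-type expansion whose convergence conditions depend only on the conjugacy class (hyperbolic vs.\ parabolic) of the generator of $\Gamma$, not on its translation length in the hyperbolic case. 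An $\hbar$-independent coefficient identification then yields the strong isomorphism $\mathcal{A}(A_R) \si \mathcal{A}(A_{R'})$ claimed in the second half of (b), while the genuinely different growth profiles of the coefficients near a single parabolic fixed point versus two hyperbolic fixed points should produce an $\hbar$-dependent invariant separating $\mathcal{A}(\D^*)$ from $\mathcal{A}(A_R)$.

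The main obstacle I anticipate is a rigorous execution of the orbit-accumulation argument in (d). It is morally clear that a holomorphic function on $\Omega$ invariant under a non-elementary complexified Fuchsian action must be constant, but the accumulation points cluster near the boundary of $\Omega$ along the excluded variety $\{zw=1\}$, where $F$ may well be singular; care is therefore needed in choosing accumulation points lying safely inside $\Omega$ and in controlling the boundary behaviour of $F$ there. A secondary subtlety is the separation between (b) and (c): since both have cyclic deck groups, the distinction must be detected by an invariant that genuinely uses the full $\hbar$-family of star products rather than any single algebra structure.
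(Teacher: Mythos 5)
Your overall architecture coincides with the paper's: the forward implications come from Proposition~\ref{prop:confinv}, the converses reduce via uniformization to computing the $\hat\Gamma$--invariant functions in $\mathcal{H}(\Omega)$ (resp.\ $\mathcal{H}(G)$) for the four conjugacy classes of torsion--free Fuchsian groups, and the four resulting algebras must then be shown pairwise not strongly isomorphic. The complexification step you describe is exactly the paper's Lemma~\ref{lem:2_1}. However, your execution of the two hard steps has genuine gaps.

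For (d), the orbit--accumulation mechanism does not work as stated. In the model $G=\hat\C^2\setminus\{(z,z)\}$ the $\hat\Gamma$--orbit of a point $(z,\overline z)$ on the totally real slice accumulates at points $(p,p)$ of the diagonal (equivalently, at $\{zw=1\}$ in the $\Omega$--model), i.e.\ precisely on the excluded variety where $F$ need not extend --- this is the difficulty you flag but do not resolve. Moreover, even if one arranges accumulation at an interior point of $G$, constancy of $F$ on a countable orbit clustering at one point does not force $F$ to be constant in two complex variables (consider $F(z,w)=z$ vanishing on $\{(0,1/n)\}$). The paper's Theorem~\ref{thm:gen} avoids orbits entirely: for a hyperbolic $\gamma\in\Gamma$ with fixed points $w_0,z_0\in\partial\H$, the one--variable function $g=F(\cdot,w_0)$ satisfies $g\circ\gamma=g$ and $g(z_0)=g(\gamma(z_0))$ with $\gamma'(z_0)\neq 1$; differentiating repeatedly at $z_0$ kills all Taylor coefficients, so $g$ is constant on the connected set $\hat\C\setminus\{w_0\}$. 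Density of hyperbolic fixed points in the limit set $\Lambda(\Gamma)$ then makes $F$ constant on $(\Lambda(\Gamma)\times\Lambda(\Gamma))\cap G$, a product of perfect sets, which kills all partial derivatives and forces $F$ to be constant. You would need to replace your sketch by an argument of this kind.

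For the separation of (b) from (c), your proposal is only a heuristic. The paper first proves (Theorems~\ref{thm:annulus} and~\ref{thm:punctureddisk}) that the invariants are exactly $g(w/(z-w))$ resp.\ $g(1/(z-w))$ with $g\in\mathcal{H}(\C)$ --- note in particular that the hyperbolic case is independent of the multiplier $c$, which is what gives $\mathcal{A}(A_R)\si\mathcal{A}(A_{R'})$ via $T_R\circ T_{R'}^{-1}$; this part of your plan is sound. But the non--isomorphism $\mathcal{A}(A_R)\not\si\mathcal{A}(\D^*)$ requires an actual computation: the paper evaluates $f_R\star_{A_R}f_R=f_R^2+\hbar(f_R^2-1)$, which is a polynomial of degree one in $\hbar$, pushes it through a putative strong isomorphism $\Psi$, expands $\Psi(f_R)\star_{\hbar,\D^*}\Psi(f_R)$ asymptotically in $\hbar$ using \cite[Theorem 6.9]{HeinsMouchaRoth1}, and compares coefficients of $\hbar^0,\hbar^1,\hbar^2$ to conclude $\Psi(f_R)=\pm 1$, a contradiction. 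Your intuition that the distinction must exploit the full $\hbar$--family (the coefficient profiles $(w^2-1)^n$ versus $w^{2n}$) points in the right direction, but as written it is not a proof.
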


\hide{  \begin{remarks} \textcolor{red}{verschieben$\rightarrow$ Beweis von Corollary \ref{cor:2}}
    Theorem \ref{thm:main} (a) is an immediate consequence of Theorem
    \ref{thm:KRSW} and Proposition \ref{prop:confinv}.
 Theorem \ref{thm:main} implies that $\mathcal{A}(X)$ separates points
if and only if $X$ is simply connected, since by (b) and (c) the
sets $\mathcal{A}(A_R)$ and $\mathcal{A}(\D^*)$ only consist of radially
symmetric functions, {\color{blue} and because $\Omega$ is a Stein manifold, see Theorem \ref{thm:G} (a).} 
\end{remarks}}

 The proof of  Theorem \ref{thm:main} depends in an essential way  on the  description
  of $\mathcal{A}(\D)$ in terms of $\mathcal{H}(\Omega)$ and  classical
  uniformization theory, and thereby on the complex
  analytic properties of the Riemann surface $X$. However,   the
  structure of $(\mathcal{A}(X),\star_X)$ as~a~Fr\'echet algebra  depends
  \textit{only} on the topological type of $X$, except when $X$ is doubly connected. This is the content of the following
  result, which follows from   Theorem \ref{thm:main} and the Uniformization Theorem.

    \begin{corollary} \label{cor:1}
 Let $X$ be a hyperbolic Riemann surface. Then the following hold.
  \begin{itemize}
  \item[(a)] $X$ is simply connected if and only if  $ \mathcal{A}(X) \si \A$.
        
  \item[(b)]  $X$ is doubly connected and conformally equivalent to some annulus $A_R$, $R>1$,  
    if and only if  $\mathcal{A}(X) \si \mathcal{A}(A_R)$. In this case, $\mathcal{A}(X) \si \mathcal{A}(A_R)$ for every $R>1$.

\item[(c)] $X$ is doubly connected and conformally equivalent to the punctured disk $\D^*$ if and only if
     $\mathcal{A}(X)  \si \mathcal{A}(\D^*) $.

\item[(d)]  $X$ is neither simply nor doubly connected if and only if  $\mathcal{A}(X)$ consists only of constant functions.
    \end{itemize}
      \end{corollary}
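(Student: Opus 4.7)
The plan is to deduce Corollary \ref{cor:1} directly from Theorem \ref{thm:main} by translating the condition of being conformally equivalent to $\D$, to some annulus $A_R$, or to $\D^*$ into the topological language of connectivity. The only additional ingredient needed beyond Theorem \ref{thm:main} is the classical classification of hyperbolic Riemann surfaces with cyclic fundamental group, a standard consequence of the Uniformization Theorem.

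First I would recall this classification explicitly: every simply connected hyperbolic Riemann surface is conformally equivalent to $\D$, and every doubly connected hyperbolic Riemann surface is conformally equivalent to either some annulus $A_R$, $R>1$, or to the punctured disk $\D^*$ (the remaining planar model $\C^*$ is parabolic and therefore excluded in the hyperbolic setting). Conversely, $\D$ is simply connected, while each $A_R$ and $\D^*$ has connectivity exactly~$2$.

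With this dictionary in hand, parts (a)--(c) of the corollary become immediate reformulations of the corresponding parts of Theorem \ref{thm:main}. In the forward direction, the topological hypothesis on $X$ combined with the specified conformal model is precisely the conformal hypothesis required by Theorem \ref{thm:main}; in the reverse direction, a strong isomorphism $\mathcal{A}(X) \si \mathcal{A}(\D)$, $\mathcal{A}(X) \si \mathcal{A}(A_R)$, or $\mathcal{A}(X) \si \mathcal{A}(\D^*)$ forces $X$ to be conformally equivalent to $\D$, to some $A_{R'}$, or to $\D^*$ respectively by Theorem \ref{thm:main}, which in turn fixes the connectivity. The additional assertion in part (b) that $\mathcal{A}(X) \si \mathcal{A}(A_R)$ holds for every $R>1$ is exactly the corresponding statement of Theorem \ref{thm:main}(b).

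For part (d), if $X$ is neither simply nor doubly connected, then by the classification just recalled $X$ is not conformally equivalent to $\D$, to any annulus $A_R$, or to $\D^*$, so Theorem \ref{thm:main}(d) applies and $\mathcal{A}(X)$ consists only of constants; the converse follows from parts (a)--(c) of Theorem \ref{thm:main}, which rule out connectivity one and two as soon as $\mathcal{A}(X)$ is trivial. There is no substantial obstacle: all the hard work is packaged in Theorem \ref{thm:main}, and the corollary is purely an exercise in unpacking the topological content of the Uniformization Theorem.
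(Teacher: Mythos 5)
Your proposal is correct and follows essentially the same route as the paper, which derives Corollary \ref{cor:1} from Theorem \ref{thm:main} together with the Uniformization Theorem's classification of simply and doubly connected hyperbolic surfaces (with $\C^*$ excluded as parabolic). No gaps.
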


      In particular, if $X$ is a compact Riemann surface of genus greater or equal than two, then $\mathcal{A}(X)$  degenerates and consists only of constant functions. This might be viewed as a  very strict  topological rigidity phenomenon. 
      We also see that the  Fr\'echet algebras $(\mathcal{A}(A_R),\star_{A_R})$ with  $R>1$  are all strongly isomorphic, even though  $A_R$ and $A_{R'}$  are not conformally equivalent if $R\not=R'$. The following result gives an explicit description of $\mathcal{A}(A_R)$ and the star product $\star_{A_R,\hbar}$. In order to properly formulate this result, we introduce for each $R>0$ the  auxiliary function
      \begin{equation} \label{eq:f_RDef}
  f_R : A_R \to \C \, , \qquad f_R(z):= -i \tan \left( \frac{\pi}{2 \log R} \log |z| \right) \, .
\end{equation}   
The choice of the prefactor $-i$ in (\ref{eq:f_RDef}) facilitates some of the computations.  We also denote
\begin{equation} \label{eq:coeff}
  c_n(\hbar):=\begin{cases} \, \displaystyle  \frac{\hbar^n}{\prod \limits_{j=0}^{n-1} \left(1+j \hbar \right)} & \text{ for } n \ge 1 \, , \\
    \, 1 & \text{ for } n=0 \, .
    \end{cases}
    \end{equation}

      \begin{theorem}[The Fr\'echet algebra $(\mathcal{A}(A_R),\star_{A_R})$ for annuli $A_R=\{z \in \C \, : \, 1/R<|z|<R\}$] \label{thm:mainannulus}
        Let $R>1$. Then the mapping
\begin{equation} \label{eq:T_R}
  T_R : \mathcal{H}(\C) \to \mathcal{A}(A_R) \, , \qquad T_R(g):=g \circ f_R\, ,
  \end{equation}
  is a continuous linear bijection between the Fr\'echet spaces $\mathcal{H}(\C)$ and $\mathcal{A}(A_R)$.
In particular, 
\begin{equation} \label{eq:A(A_R)}
\mathcal{A}(A_R)=\big\{ g \circ f_R \, : \,  g \in \mathcal{H}(\C)\big\} \, .
\end{equation}
 
Moreover, let $g, \tilde{g} \in \mathcal{H}(\C)$, and  $f:=g \circ f_R, \tilde{g} \circ f_R \in \mathcal{A}(A_R)$. Then 
 \begin{equation} \label{eq:FormulaStarAnnulus}
          f \star_{A_R} \tilde{f}=\sum \limits_{n=0}^{\infty} \frac{c_n(\hbar)}{n!} \left( f_R^2-1 \right)^n  \left( g^{(n)} \circ f_R \right) \cdot  \left( \tilde{g}^{(n)} \circ f_R \right) \, .
        \end{equation}
        In particular, the star product $\star_{A_R}$ is commutative,
          $$f \star_{A_R} \tilde{f}=\tilde{f} \star_{A_R} f\, .$$
Furthermore, for fixed $\hbar \in \mathscr{D}$, the series (\ref{eq:FormulaStarAnnulus}) converges in the topology of $\mathcal{A}(A_R)$.
     
          \end{theorem}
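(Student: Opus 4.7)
The plan is to transfer everything to the upper half--plane $\H$ via Cayley, since the annulus is naturally realized as $\H/\langle w\mapsto \lambda w\rangle$ with $\lambda=e^{\pi^2/\log R}$. A convenient universal covering is $\pi_\H:\H\to A_R$, $\pi_\H(w)=R\,w^{2i\log R/\pi}$; from $|\pi_\H(w)|=R\,e^{-2\arg(w)\log R/\pi}$ one obtains after a short computation the key identity
\[
(f_R\circ\pi_\H)(w)=-i\cot(\arg w)=\frac{w+\bar w}{w-\bar w}.
\]
Hence $f_R$ lifts to the rational function $F_R(w_1,w_2):=(w_1+w_2)/(w_1-w_2)$ of two independent variables, and one checks that the Cayley change of variables carries the disk domain $\Omega$ onto $\Omega_\H:=\hat{\C}^2\setminus\{w_1=w_2\}$, which is precisely the locus where $F_R$ is holomorphic and $\C$--valued.

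\smallskip

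Well--definedness and continuity of $T_R$ are immediate from this: for every entire $g$, $g\circ F_R\in\mathcal{H}(\Omega_\H)$, so $T_R(g)\in\mathcal{A}(A_R)$, and locally uniform convergence $g_n\to g$ on $\C$ propagates to locally uniform convergence of $g_n\circ F_R$ on $\Omega_\H$. Injectivity follows from the identity theorem, since $f_R(A_R)=i\R$ has accumulation points in $\C$. The substance of the bijectivity claim is thus surjectivity. Given $f\in\mathcal{A}(A_R)$, let $F\in\mathcal{H}(\Omega_\H)$ be the (unique) two--variable extension of $f\circ\pi_\H$; invariance under the deck group reads $F(\lambda w_1,\lambda w_2)=F(w_1,w_2)$. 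Passing to coordinates $\xi:=w_1/w_2$, $\eta:=w_2$, this says $F$ is $\lambda$--periodic in $\eta$, and, for each fixed $\xi\neq 1$, holomorphic in $\eta\in\C^\ast$. Its Laurent expansion $F=\sum_{k\in\mathbb{Z}}a_k(\xi)\eta^k$ must therefore satisfy $a_k(\xi)(\lambda^k-1)=0$, forcing $a_k\equiv 0$ for $k\neq 0$. Hence $F$ depends only on $\xi\in\hat{\C}\setminus\{1\}$, and the Möbius biholomorphism $\xi\mapsto (\xi+1)/(\xi-1)=F_R(\xi,1)$ from $\hat{\C}\setminus\{1\}$ onto $\C$ exhibits $F$ as $g\circ F_R$ for a unique entire $g$, whence $f=g\circ f_R$. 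Since both $\mathcal{H}(\C)$ and $\mathcal{A}(A_R)$ are Fréchet spaces, the open mapping theorem then upgrades $T_R$ to a topological isomorphism.

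\smallskip

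For the star--product formula, the plan is to invoke the explicit expression for $\star_\D$ obtained in \cite{HeinsMouchaRoth1,SchmittSchoetz2022}, which in the two--variable picture reads
\[
(F\star_\D G)(z,v)=\sum_{n\ge 0}\frac{c_n(\hbar)}{n!}(1-zv)^n(\partial_1^n F)(\partial_2^n G),
\]
and to transfer it to $\H$--coordinates by Cayley in each factor. The computation should simplify drastically for $F=g\circ F_R$, $G=\tilde g\circ F_R$: since $F_R$ is homogeneous of degree $0$ in $(w_1,w_2)$, one has $\partial_{w_1}^n(g\circ F_R)=w_2^{-n}\partial_\xi^n[g(F_R(\xi))]\big|_{\xi=w_1/w_2}$ without any cross terms between the $w_1$-- and $w_2$--derivatives, and an analogous clean expression holds on the $w_2$ side after rewriting in $\eta=w_2/w_1$. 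Combined with the algebraic identity $F_R^2-1=4w_1w_2/(w_1-w_2)^2$, the conformal prefactors from Cayley---including the powers of $1-zv$---should collapse into a single $(F_R^2-1)^n$ per summand, and what remains of the derivative factors is exactly $(g^{(n)}\circ F_R)(\tilde g^{(n)}\circ F_R)$. Descending to $A_R$ then yields \eqref{eq:FormulaStarAnnulus}. Commutativity is immediate from the symmetry of the $n$--th summand under $(f,\tilde f)\leftrightarrow(\tilde f,f)$, and convergence of the series in the $\mathcal{A}(A_R)$--topology follows from convergence of the corresponding series for $\star_\D$ (Theorem~\ref{thm:KRSW}) together with the continuity of $T_R^{-1}$.

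\smallskip

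The main obstacle will be this last step: carefully executing the chain--rule and Cayley--Jacobian bookkeeping and verifying that all prefactors telescope into the clean form $(f_R^2-1)^n g^{(n)}(f_R)\tilde g^{(n)}(f_R)$. Everything preceding it is structural.
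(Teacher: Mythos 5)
Your first half (the characterization $\mathcal{A}(A_R)=\{g\circ f_R\}$ and the topological isomorphism) is correct and is essentially the paper's argument: you lift to $\H$, extend to the two--variable domain $G=\hat{\C}^2\setminus\{w_1=w_2\}$, kill the nonzero Laurent modes in the fiber variable using multiplicative periodicity, and conclude that the extension depends only on $w_1/w_2$; the paper packages exactly this as Theorem \ref{thm:annulus} and then composes with a M\"obius change of variable, finishing with the bounded inverse/open mapping theorem just as you do. Your identity $f_R\circ\pi_\H=(w+\bar w)/(w-\bar w)$ agrees with the paper's $\overline{z}/(z-\overline{z})=\tfrac12 f_R(\pi(z))-\tfrac12$.

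The star--product formula \eqref{eq:FormulaStarAnnulus} is where there is a genuine gap, for two reasons. First, the formula you start from,
$(F\star_\D G)=\sum_n \frac{c_n(\hbar)}{n!}(1-zv)^n(\partial_1^nF)(\partial_2^nG)$,
is not the correct expression for $\star_\D$: already at $n=1$ the Wick product carries the factor $(1-|z|^2)^2$, not $(1-|z|^2)^1$, and for $n\ge 2$ the $n$--th bidifferential operator is \emph{not} of the pure form $(1-zv)^{2n}\partial_1^n\partial_2^n$ either --- it contains lower--order correction terms. This is precisely why the paper writes $\star_\D$ in terms of the Peschl--Minda invariant derivatives, $f\star_{\D,\hbar}\tilde f=\sum_n\frac{c_n(\hbar)}{n!}\,D^n\tilde f\cdot\overline{D}^nf$ with $D^{n+1}f=(1-|z|^2)\partial^{n+1}[(1-|z|^2)^nf]$; note e.g.\ $D^2f=(1-|z|^2)^2\partial^2f-2\bar z(1-|z|^2)\partial f\neq(1-|z|^2)^2\partial^2f$. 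Second, and more importantly, you explicitly defer the entire computation (``the prefactors should collapse'', ``the main obstacle will be this last step''), but that computation \emph{is} the content of the claim. The paper's proof does not do Cayley bookkeeping at all: it works directly on $\D$ with the covering $\pi_R:\D\to A_R$ and the auxiliary function $p(z)=(z-\bar z)/(1-|z|^2)$ satisfying $f_R\circ\pi_R=p$, and proves by induction from the displayed recursion for $D^{n+1}$ the closed form
$D^n(g\circ p)=\bigl(\tfrac{1-\bar z^2}{1-|z|^2}\bigr)^n\,(g^{(n)}\circ p)$,
together with its conjugate, whence $\overline{D}^n(g\circ p)\,D^n(\tilde g\circ p)=(p^2-1)^n(g^{(n)}\circ p)(\tilde g^{(n)}\circ p)$. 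Without this identity (or an equivalent substitute), formula \eqref{eq:FormulaStarAnnulus}, and hence commutativity, is asserted rather than proved. The convergence statement you reduce correctly to the convergence of the $\star_\D$ series plus continuity of the transfer maps, which matches the paper.
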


          As a corollary we deduce that the Fr\'echet algebras $(\mathcal{A}(A_R),\star_{A_R})$ and  $(\mathcal{A}(A_{R'}),\star_{A_{R'}})$ are strongly isomorphic for every choice of $R>1$ and $R'>1$:
          
\begin{corollary} \label{cor:mainannulus}
  Let $R,R' >1$. Then the mapping
  $$\Psi_{R',R} : \mathcal{A}(A_{R'}) \to \mathcal{A}(A_R) \, , \qquad \Psi_{R',R}:=T_R \circ T^{-1}_{R'} $$ 
is a Fr\'echet algebra isomorphism between $(\mathcal{A}(A_{R'}),\star_{A_{R'},\hbar})$ and $(\mathcal{A}(A_R),\star_{A_R,\hbar})$ for every $\hbar \in \mathscr{D}$.
\end{corollary}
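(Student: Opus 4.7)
\textbf{Proof plan for Corollary \ref{cor:mainannulus}.}

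The plan is to first establish that $\Psi_{R',R}$ is a continuous linear bijection of Fréchet spaces, and then verify multiplicativity by exploiting the fact that the star product formula (\ref{eq:FormulaStarAnnulus}) depends on $R$ only through the outer composition with $f_R$, while its ``coefficients" are intrinsic entire functions of $g$ and $\tilde g$ alone.

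\emph{Step 1 (topological isomorphism).} By Theorem \ref{thm:mainannulus}, each $T_R\colon \mathcal{H}(\C)\to\mathcal{A}(A_R)$ is a continuous linear bijection between Fréchet spaces. The open mapping theorem for Fréchet spaces then forces $T_R^{-1}$ to be continuous. Consequently $\Psi_{R',R}=T_R\circ T_{R'}^{-1}$ is a continuous linear bijection with continuous inverse $\Psi_{R,R'}=T_{R'}\circ T_R^{-1}$.

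\emph{Step 2 (multiplicativity).} Fix $\hbar\in\mathscr{D}$, pick $f,\tilde f\in\mathcal{A}(A_{R'})$, and set $g:=T_{R'}^{-1}(f)$, $\tilde g:=T_{R'}^{-1}(\tilde f)$, so that $f=g\circ f_{R'}$ and $\tilde f=\tilde g\circ f_{R'}$. Define the entire functions
\[
  h_n(\zeta):=(\zeta^2-1)^n\, g^{(n)}(\zeta)\,\tilde g^{(n)}(\zeta),\qquad n\ge 0.
\]
Then formula (\ref{eq:FormulaStarAnnulus}), applied in $\mathcal{A}(A_{R'})$, reads
\[
  f\star_{A_{R'}}\tilde f \;=\; \sum_{n=0}^{\infty} \frac{c_n(\hbar)}{n!}\, T_{R'}(h_n),
\]
with convergence in $\mathcal{A}(A_{R'})$. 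Applying the continuous linear map $T_{R'}^{-1}$ and invoking its continuity (Step 1), the series $h:=\sum_{n\ge 0}\frac{c_n(\hbar)}{n!}\,h_n$ converges in $\mathcal{H}(\C)$ and satisfies $T_{R'}^{-1}(f\star_{A_{R'}}\tilde f)=h$. Applying the continuous map $T_R$ then yields
\[
  \Psi_{R',R}(f\star_{A_{R'}}\tilde f)\;=\;T_R(h)\;=\;\sum_{n=0}^{\infty}\frac{c_n(\hbar)}{n!}\,(h_n\circ f_R),
\]
the last equality by continuity of $T_R$.

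\emph{Step 3 (matching with $\star_{A_R}$).} By definition $\Psi_{R',R}(f)=g\circ f_R$ and $\Psi_{R',R}(\tilde f)=\tilde g\circ f_R$, so formula (\ref{eq:FormulaStarAnnulus}) applied now in $\mathcal{A}(A_R)$ with the \emph{same} $g$ and $\tilde g$ gives
\[
  \Psi_{R',R}(f)\star_{A_R}\Psi_{R',R}(\tilde f)\;=\;\sum_{n=0}^{\infty}\frac{c_n(\hbar)}{n!}\,(h_n\circ f_R),
\]
which coincides with the expression obtained at the end of Step 2. Hence $\Psi_{R',R}$ is multiplicative, and combined with Step 1 it is a Fréchet algebra isomorphism for every $\hbar\in\mathscr{D}$.

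The only delicate point is the passage in Step 2 from convergence of $\sum T_{R'}(h_n)$ in $\mathcal{A}(A_{R'})$ to convergence of $\sum h_n$ in $\mathcal{H}(\C)$; this is where the open mapping theorem is essential and is the one nontrivial input beyond Theorem \ref{thm:mainannulus}. Everything else is a direct manipulation of (\ref{eq:FormulaStarAnnulus}), using that the ``coefficients'' $h_n$ are independent of $R$.
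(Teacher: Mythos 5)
Your proposal is correct and follows essentially the same route as the paper: both arguments rest on the intertwining identity $\Psi_{R',R}(h\circ f_{R'})=h\circ f_R$ applied to the entire functions $h_n(\zeta)=(\zeta^2-1)^n g^{(n)}(\zeta)\tilde g^{(n)}(\zeta)$, together with the convergence of the series (\ref{eq:FormulaStarAnnulus}) in the Fr\'echet topology and the continuity of the maps (via the open mapping/bounded inverse theorem) to justify interchanging $\Psi_{R',R}$ with the infinite sum. Your detour through $\mathcal{H}(\C)$ by applying $T_{R'}^{-1}$ and then $T_R$ is just the paper's single application of $\Psi_{R',R}=T_R\circ T_{R'}^{-1}$ split into two steps.
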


The next result  gives an explicit description of $\mathcal{A}(\D^*)$ and the star product $\star_{\D^*,\hbar}$ for the punctured unit disk $\D^*=\D\setminus \{0\}$. We first introduce the auxiliary function
\begin{equation} \label{eq:f_0Def}
  f_0 : \D^* \to \C \, , \qquad f_0(z):=-\frac{1}{\log |z|}  \, .
  \end{equation}

        \begin{theorem}[Wick star product for the punctured disk $\D^*=\D \setminus \{0\}$] \label{thm:mainpunctureddisk}
          The mapping
           $$T_0 : \mathcal{H}(\C) \to \mathcal{A}(\D^*) \, , \qquad T_0(g):=g \circ f_0 $$
        is a  continuous linear bijection between the Fr\'echet spaces $\mathcal{H}(\C)$ and $\mathcal{A}(\D^*)$.
        In particular,  
        \begin{equation} \label{eq:A(D^*)}
          \mathcal{A}(\D^*)=\big\{ g \circ f_0 \, : \,  g \in \mathcal{H}(\C) \big\} \, .
          \end{equation}
          Moreover, let  $g, \tilde{g} \in \mathcal{H}(\C)$, and  $f:=g \circ f_0, \tilde{g} \circ f_0 \in \mathcal{A}(\D^*)$. Then
          \begin{equation} \label{eq:FormulaStarPuncturedDisk}
          f \star_{\D^*} \tilde{f}=\sum \limits_{n=0}^{\infty} \frac{c_n(\hbar)}{n!} \, f_0^2 \cdot \left( g^{(n)} \circ f_0 \right) \cdot \left( \tilde{g}^{(n)} \circ f_0 \right) \, .
        \end{equation}
        In particular, the star product $\star_{\D^*}$ is commutative, $$f \star_{\D^*} \tilde{f}=\tilde{f} \star_{\D^*} f \, .$$
Furthermore, for fixed $\hbar \in \mathscr{D}$, the series (\ref{eq:FormulaStarPuncturedDisk}) converges in the topology of $\mathcal{A}(\D^*)$.
\end{theorem}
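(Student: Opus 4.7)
The strategy is to run the proof in close parallel with Theorem \ref{thm:mainannulus}, replacing the hyperbolic-type deck group of the annulus by the parabolic deck group appropriate for the punctured disk. First I would fix a universal covering $\pi:\D\to\D^*$ obtained by composing the inverse Cayley transform $\sigma^{-1}:\D\to\H$ with $\tau\mapsto e^{i\tau}:\H\to\D^*$. The deck group $\Gamma$ consists of parabolic Möbius transformations fixing the boundary point $1\in\partial\D$; a direct computation in the coordinate $u:=1/(1-z)$ shows that the generator of $\Gamma$ acts as the translation $u\mapsto u-i\pi$. By Theorem \ref{thm:KRSW} and Proposition \ref{prop:1}, $\mathcal{A}(\D^*)$ consists precisely of those functions $f:\D^*\to\C$ whose lift $\tilde f=f\circ\pi$ has the form $\tilde f(z)=F(z,\bar z)$ with $F\in\mathcal{H}(\Omega)$ invariant under the induced action $F(z,w)=F(\gamma z,\alpha w)$ on $\Omega$, where $\alpha(w):=\overline{\gamma(\bar w)}$; the extension of invariance from the totally real slice $\{w=\bar z\}$ to all of $\Omega$ is immediate from the identity principle.

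Next I would check that $f_0$ lifts, via $\pi$, to the restriction to $\{w=\bar z\}$ of the function
\[
F_0(z,w):=\frac{(1-z)(1-w)}{1-zw},
\]
and verify that $F_0$ extends holomorphically to all of $\Omega$ (including the points at infinity, where it takes the values $(1-w)/w$, $(1-z)/z$, and $-1$ respectively, singular exactly at the excluded points $(0,\infty)$, $(\infty,0)$ and the excluded locus $zw=1$). Hence $G\circ F_0\in\mathcal{H}(\Omega)$ for every entire $G$, and the inclusion $T_0(\mathcal{H}(\C))\subseteq\mathcal{A}(\D^*)$ follows.

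For the reverse inclusion—the principal technical step—I would pass to the coordinates $(u,v):=\bigl(1/(1-z),1/(1-w)\bigr)$. In these variables the $\Gamma$-action becomes $(u,v)\mapsto(u-i\pi k,v+i\pi k)$, the excluded locus $zw=1$ becomes the line $u+v=1$, and $F_0=1/(u+v-1)$. The invariant $u+v$ is thus nothing but $1+1/F_0$, while the remaining invariant is $u-v$ modulo $2\pi i$. The key point is that the fibers $\{u+v=\mathrm{const}\}\cap\Omega$, upon taking the $\Gamma$-quotient, are forced by the holomorphic extension across the points at infinity of $\hat\C^2$ to collapse: any $\Gamma$-invariant $F\in\mathcal{H}(\Omega)$ must therefore factor through $F_0$ as $F=G\circ F_0$ for some $G\in\mathcal{H}(\C)$. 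This proves the representation (\ref{eq:A(D^*)}) and the bijectivity of $T_0$. Continuity of $T_0$ and of its inverse then follows from the fact that composition with the fixed holomorphic $F_0:\Omega\to\C$ induces a continuous linear map $\mathcal{H}(\C)\to\mathcal{H}(\Omega)$ in the compact-open topologies, combined with the open mapping theorem for Fréchet spaces.

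For the star-product formula (\ref{eq:FormulaStarPuncturedDisk}) I would invoke the explicit expression for $\star_{\D}$ derived in \cite{HeinsMouchaRoth1,SchmittSchoetz2022}, apply it to lifted functions of the form $(G\circ F_0)(z,\bar z)$ and $(\tilde G\circ F_0)(z,\bar z)$, and expand using the chain rule. The crucial computation reduces to evaluating the mixed $z,\bar z$-derivatives of $F_0|_{w=\bar z}$: differentiation in $z$ and $\bar z$ produces symmetric polynomial factors in $F_0$ itself, and the combinatorics collapse to the coefficients $c_n(\hbar)/n!$ multiplied by $f_0^{2n}$—the latter factor is the punctured-disk analogue of $(f_R^2-1)^n$ in (\ref{eq:FormulaStarAnnulus}), reflecting the degeneration $\tan\to\mathrm{id}$ as $R\to\infty$. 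Commutativity is then manifest from the symmetry of (\ref{eq:FormulaStarPuncturedDisk}). Finally, convergence of the series in the $\mathcal{A}(\D^*)$-topology follows from convergence of the lifted series in the $\mathcal{A}(\D)$-topology, using the growth estimates $|c_n(\hbar)|=O(n^{-\mathrm{Re}(1/\hbar)})$ and the standard Cauchy bounds on $g^{(n)}$ for $g\in\mathcal{H}(\C)$. I expect the main obstacle to lie in the factorization step of the third paragraph, where the precise analysis of $\Omega$ in the $(u,v)$-coordinates near $u+v=1$ and near the points at infinity is needed to exclude spurious invariants depending on $u-v$.
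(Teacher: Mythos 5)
Your overall architecture matches the paper's: reduce membership in $\mathcal{A}(\D^*)$ to classifying the deck--group--invariant functions in $\mathcal{H}(\Omega)$, show that these are exactly the entire functions of $F_0$ (equivalently, of $1/(z-w)$ in the model $G$ used in the paper), and then obtain (\ref{eq:FormulaStarPuncturedDisk}) by feeding the lifted functions into the explicit Peschl--Minda series for $\star_{\D}$. Your coordinate computations are correct: the generator does act as $u\mapsto u-i\pi$, one has $F_0=1/(u+v-1)$ in the coordinates $(u,v)$, and $F_0$ restricted to $\{w=\overline{z}\}$ is indeed the lift $f_0\circ\pi_0=q$ used in the paper.

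The genuine gap is exactly where you yourself place it: the assertion that every invariant $F\in\mathcal{H}(\Omega)$ factors as $G\circ F_0$. Saying that the fibers of $u+v$ ``are forced by the holomorphic extension across the points at infinity to collapse'' is a restatement of the claim, not an argument: a priori an invariant function may also depend on the second invariant $e^{u-v}$ (in the $G$--model, on $e^{2\pi i z}$ and $e^{2\pi i w}$ separately), and holomorphy on the affine part of $\Omega$ does not exclude this. The paper isolates this classification as Theorem \ref{thm:punctureddisk} and proves it in two steps: (i) for every $k$ the function $\partial^k F/\partial w^k(\cdot,\infty)$ is constant, which follows from invariance under $(z,w)\mapsto(z+n,w+n)$ together with the limit $n\to\infty$ and holomorphy at the points at infinity; (ii) writing $H(u,v):=F(u,u-v)$ and expanding the $1$--periodic entire function $H(\cdot,v)$ in a Fourier series $\sum_n a_n(v)e^{2\pi i n u}$, step (i) forces every derivative of $a_n$ at $v=\infty$ to vanish for $n\neq 0$, whence $a_n\equiv 0$ and $H$ depends on $v$ alone. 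Without some argument of this kind the factorization, and with it (\ref{eq:A(D^*)}) and the bijectivity of $T_0$, remains unproven. A minor further remark: your computation produces the factor $f_0^{2n}$ in the $n$-th term, which is what the paper's own identity $\overline{D}^n(g\circ q)\,D^n(\tilde g\circ q)=q^{2n}\,(g^{(n)}\circ q)(\tilde g^{(n)}\circ q)$ also yields; the exponent $2$ appearing in the displayed statement of the theorem is evidently a typographical slip, so you should not regard the discrepancy as an error on your side.
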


\begin{remark}
The series in (\ref{eq:FormulaStarAnnulus}) as well as in (\ref{eq:FormulaStarPuncturedDisk}) is a \textit{factorial series} in $\hbar \in \mathscr{D}$ and converges locally uniformly in $\mathscr{D}$.
\end{remark}

The following result was a bit  surprising for us.

\begin{theorem} \label{cor:mainpunctureddisk}
 Let  $R>1$. Then the commutative Fr\'echet algebras $(\mathcal{A}(A_R),\star_{A_R})$ and $(\mathcal{A}(\D^*),\star_{\D^*})$ are  not strongly isomorphic.
  \end{theorem}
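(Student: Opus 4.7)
The plan is to transport the problem to the common Fréchet space $\mathcal{H}(\C)$ via the explicit bijections $T_R$ and $T_0$ of Theorems~\ref{thm:mainannulus} and~\ref{thm:mainpunctureddisk}, and then derive a contradiction by comparing two different orders in $\hbar$. Suppose for contradiction that $T\colon \mathcal{A}(A_R)\to\mathcal{A}(\D^*)$ is a strong isomorphism, and set $S := T_0^{-1}\circ T\circ T_R$, a continuous linear bijection of $\mathcal{H}(\C)$. Pulling the star products back along $T_R$ and $T_0$, formulas~\eqref{eq:FormulaStarAnnulus} and~\eqref{eq:FormulaStarPuncturedDisk} yield products $\star^{A_R}_\hbar$ and $\star^{\D^*}_\hbar$ on $\mathcal{H}(\C)$ given by the same sums with the free variable $w \in \C$ replacing $f_R$ and $f_0$, and the strong-isomorphism hypothesis becomes
\[
  S\bigl(g \star^{A_R}_\hbar \tilde g\bigr) \;=\; S(g) \star^{\D^*}_\hbar S(\tilde g) \qquad \text{for all } g, \tilde g \in \mathcal{H}(\C),\ \hbar \in \mathscr{D}.
\]

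First I would extract the classical limit $\hbar \to 0$. Since $c_n(0)=0$ for every $n\ge 1$, both pulled-back products degenerate to ordinary pointwise multiplication on $\mathcal{H}(\C)$, so $S$ must be a continuous unital automorphism of $(\mathcal{H}(\C), \cdot)$. Using the standard fact that every continuous character of $\mathcal{H}(\C)$ is a point evaluation $\mathrm{ev}_w$, $w \in \C$ (because $\C$ is Stein), $S$ acts by composition with a bijection $\sigma\colon \C \to \C$; applying $S$ and $S^{-1}$ to the coordinate function shows that $\sigma$ and $\sigma^{-1}$ are both entire. Hence $\sigma(v) = av + b$ with $a \ne 0$ and $b \in \C$.

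Next I would match the coefficient of $\hbar^1$ on the two sides of the intertwining identity. Using $c_1(\hbar) = \hbar$ and choosing $g = \tilde g = \exp$, whose derivative is nowhere zero on $\C$ and hence may be cancelled, the first-order comparison forces the polynomial identity
\[
  \sigma(v)^2 - 1 \;=\; v^2\, \sigma'(v)^2, \qquad v \in \C.
\]
Substituting the affine form $\sigma(v)=av+b$ yields $2abv + (b^2 - 1) \equiv 0$, so $ab=0$ and $b^2=1$, which is incompatible with $a \ne 0$; this is the desired contradiction.

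The hard part of the plan is not this final algebraic step but the preparatory transfer: one must verify that the star products really do pull back to $\mathcal{H}(\C)$ by the stated series (using injectivity of $T_R$, $T_0$ together with the identity principle applied on the images $i\R = f_R(A_R)$ and $(0,\infty) = f_0(\D^*)$), and that the explicit form of $c_n(\hbar)$ together with the holomorphic $\hbar$-dependence on $\mathscr{D}$ legitimately permits extracting the $\hbar^0$ and $\hbar^1$ terms through the boundary value at $\hbar = 0$ of the holomorphic $\hbar$-family. Once this bookkeeping is in place, the rigidity of $\mathrm{Aut}(\mathcal{H}(\C),\cdot)$ combined with the essential difference between the first-order brackets $(w^2-1)g'\tilde g'$ and $w^2 g'\tilde g'$ leaves no room for a matching affine $\sigma$.
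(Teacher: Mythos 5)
Your argument is correct, but it reaches the contradiction by a genuinely different route than the paper. The paper never determines the global structure of the isomorphism: it applies $\Psi$ to the single generator $f_R$, computes $f_R\star_{A_R}f_R=f_R^2+\hbar\,(f_R^2-1)$ exactly (a polynomial in $\hbar$), and on the $\D^*$ side expands $\Psi(f_R)\star_{\D^*}\Psi(f_R)$ to \emph{second} order in $\hbar$ using the asymptotic expansion of $\star_{\hbar,\D}$ from \cite[Theorem 6.9]{HeinsMouchaRoth1}; the vanishing of the $\hbar^2$ coefficient is what forces the transported generator to be affine, and the $\hbar^0$ and $\hbar^1$ coefficients then yield the same polynomial identity $(\alpha q+\beta)^2-1=\alpha^2q^2$ that you arrive at. You instead conjugate everything to $\mathcal{H}(\C)$ via $T_R$ and $T_0$, use the classical limit to show that $S$ is a unital automorphism of $(\mathcal{H}(\C),\cdot)$, and invoke the rigidity of such automorphisms (continuous characters of $\mathcal{H}(\C)$ are point evaluations, injective entire maps are affine) to pin $S$ down completely before touching the deformation. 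What your route buys: only the $\hbar^1$ coefficient is needed, no second-order information, and the obstruction is displayed transparently as the incompatibility of the first-order brackets $(w^2-1)g'\tilde g'$ and $w^2g'\tilde g'$ under affine changes of variable. What it costs: you must justify the passage $\hbar\to0$, which lies outside $\mathscr{D}$ and is an accumulation point of the excluded poles $-1/n$, so the locally uniform convergence on $\mathscr{D}$ asserted in the paper's Remark does not directly apply. This is, however, routine for the pulled-back series: for real $\hbar>0$ one has $\prod_{j=0}^{n-1}(1+j\hbar)\ge(n-1)!\,\hbar^{n-2}$, hence $c_n(\hbar)\le\hbar^2/(n-1)!$ for $n\ge2$, and Cauchy estimates for the entire functions $g,\tilde g$ then show the tail over $n\ge 2$ is $O(\hbar^2)$ locally uniformly in $w$ — exactly the two coefficients you extract; continuity of $S$ lets you move this through the intertwining identity. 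One cosmetic point: the $n$-th term of (\ref{eq:FormulaStarPuncturedDisk}) should carry $f_0^{2n}$ rather than $f_0^{2}$ (compare (\ref{eq:Bi})); this does not affect your argument, since you only use the terms with $n\le1$, where the two agree.
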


The proof of Theorem \ref{cor:mainpunctureddisk} is based on the explicit formulas (\ref{eq:FormulaStarAnnulus}) for $\star_{\hbar,A_R}$ and (\ref{eq:FormulaStarPuncturedDisk}) for $\star_{\hbar,\D^*}$, but also on an asymptotic series for $\star_{\hbar,\D}$ which has recently been established in \cite[Theorem~6.9]{HeinsMouchaRoth1}.

  \begin{remark}\label{Z2_symmetry}  As it has been observed in \cite[Section 4.5]{KRSW}, the algebra $\mathcal{A}(\D)$ possesses 
	a ``$\mathbb{Z}_2$--symmetry'': if $f\in \mathcal{A}(\D), f(z)=g(z,\overline{z})$ with $g\in \mathcal{H}(\Omega)$, then 
	$g(1/z,1/w)$ also belongs to $\mathcal{H}(\Omega)$. This implies that $f$ can be extended in a natural way to $\hat{\C}\setminus \overline{\D}$ and that 	$\D\ni z\mapsto f(1/z)$ also belongs to $\mathcal{A}(\D)$.
	This symmetry is carried over to $\mathcal{A}(\D^*)$ and $\mathcal{A}(A_R)$:
        \begin{itemize}
          \item[(a)] 
	If $f=g\circ f_0$ is in  $\mathcal{A}(\D^*)$, then $f$ can be extended to $\C \setminus \overline{\D}$ and 
	$\tilde{f}(z):=f(1/z)$ again belongs to $\mathcal{A}(\D^*)$.
        \item[(b)]
	If $f=g\circ f_R $ is in $\mathcal{A}(A_R)$, then 
        $f$ extends naturally to $\{z\in\hat{\C}\,:\, |z|\not = R^{2k+1}, k\in \mathbb{Z}\cup\{\pm\infty\}\}$.
        \end{itemize}
        Thus,  in all three cases, $f$ extends to the sphere $\hat{\C}$ minus a set of circles (including radius zero).

        \medskip

We can factor out the $\mathbb{Z}_2$--symmetry and consider the set $$\mathcal{A}'(\D)=\left\{g(z,\overline{z}) \,:\,g\in \mathcal{H}(\Omega), 
g(z,w)=g\left(\frac{1}{z},\frac1{w}\right)\right\}\, .$$ Then $\mathcal{A}'(\D)$ is a $\star_{\D}$-invariant subset of $\mathcal{A}(\D)$.
Factoring out the symmetry $z\mapsto 1/z$ for $\mathcal{A}(\D^*)$ simply gives the $\star_{\D^*}$-invariant set 
$$\mathcal{A}'(\D^*)=
\left\{ g \circ f_0^2 \,  : \, g \in \mathcal{H}(\C) \right\}.$$
\end{remark}

Theorem \ref{thm:main}, Theorem \ref{thm:mainannulus} and Theorem \ref{thm:mainpunctureddisk} imply the following characterizations of simply connected hyperbolic Riemann surfaces in terms of properties of $\mathcal{A}(X)$.

     \begin{corollary} \label{cor:2}
    Let $X$ be a hyperbolic Riemann surface. Then the following are equivalent.
    \begin{itemize}
    \item[(a)]    $X$ is simply connected.
    \item[(b)] $\mathcal{A}(X)$ separates points.
    \item[(c)] The Fr\'echet algebra $(\mathcal{A}(X),\star_X)$ is noncommutative.
    \end{itemize}
  \end{corollary}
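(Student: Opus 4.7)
The plan is to establish the three implications (a) $\Rightarrow$ (b), (a) $\Rightarrow$ (c), and $\neg$(a) $\Rightarrow$ $\neg$(b), $\neg$(c), drawing all four directions directly from the structure theorems in the paper. The whole argument reduces to looking at the three model surfaces $\mathbb{D}$, $A_R$, $\mathbb{D}^*$, together with Theorem \ref{thm:main} and Proposition \ref{prop:confinv}.

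\textbf{(a) $\Rightarrow$ (b), (c).} If $X$ is simply connected then $X$ is conformally equivalent to $\mathbb{D}$ by the Uniformization Theorem, and Proposition \ref{prop:confinv} reduces both claims to $X=\mathbb{D}$. For point separation, Theorem \ref{thm:KRSW} shows $f(z)=z$ (taking $F(z,w)=z$) lies in $\mathcal{A}(\mathbb{D})$ and this single function already separates any two distinct points. For noncommutativity, I will quote the explicit lowest-order terms of the canonical Wick-type product $\star_{\mathbb{D}}$ from \cite{KRSW,CGR}: evaluating on the two generators $z$ and $\bar z$, both in $\mathcal{A}(\mathbb{D})$, yields a nonzero $\hbar$-multiple commutator for every $\hbar\in\mathscr{D}$. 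No new computation is needed beyond recording this one bracket.

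\textbf{$\neg$(a) $\Rightarrow$ $\neg$(b), $\neg$(c).} Assume $X$ is not simply connected. Split on the connectivity of $X$. If it is at least three, Theorem \ref{thm:main}(d) gives $\mathcal{A}(X)=\mathbb{C}$, so $\mathcal{A}(X)$ trivially fails to separate any pair of distinct points and $\star_X$ is trivially commutative. If $X$ is doubly connected, the Uniformization Theorem places $X$ in conformal equivalence with some $A_R$ or with $\mathbb{D}^*$, and by Proposition \ref{prop:confinv} the pullback through a biholomorphism $\Psi:X\to A_R$ (resp.~$\mathbb{D}^*$) transports both point-separation and commutativity faithfully. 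On the model, Theorem \ref{thm:mainannulus} (resp.~Theorem \ref{thm:mainpunctureddisk}) displays every element of $\mathcal{A}(A_R)$ as $g\circ f_R$ (resp.~$g\circ f_0$) for some $g\in\mathcal{H}(\mathbb{C})$. Since $f_R$ and $f_0$ are real-valued and depend only on $|z|$, every such function is constant on each circle $\{|z|=r\}$ and thus fails to separate $z$ from $\bar z$ for any non-real $z$; the same theorems state that $\star_{A_R}$ and $\star_{\mathbb{D}^*}$ are commutative.

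The only step that is not a direct citation is recording the bracket $[z,\bar z]_{\star_{\mathbb{D}}}$, and this is the main (but very mild) obstacle: everything else is a mechanical transcription of Theorem \ref{thm:main}, Theorem \ref{thm:mainannulus} and Theorem \ref{thm:mainpunctureddisk}, supplemented by the elementary observation that pullback by a biholomorphism preserves both commutativity and point separation.
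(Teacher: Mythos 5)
Your overall architecture is sound and, for the direction $\neg$(a) $\Rightarrow$ $\neg$(b), $\neg$(c), it coincides with the paper's proof: connectivity $\ge 3$ forces $\mathcal{A}(X)=\C$ by Theorem \ref{thm:main}(d), and in the doubly connected case Theorems \ref{thm:mainannulus} and \ref{thm:mainpunctureddisk} exhibit all elements as radially symmetric functions of a commutative algebra, with Proposition \ref{prop:confinv} transporting both properties. The gap is in your witnesses for (a) $\Rightarrow$ (b), (c): the function $F(z,w)=z$ does \emph{not} belong to $\mathcal{H}(\Omega)$, because $\Omega$ is an open subset of $\hat{\C}^2$ containing points such as $(\infty,1)$, and $\mathcal{H}(\Omega)$ consists of \emph{complex-valued} holomorphic functions. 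Worse, since the representing $F$ is unique by the identity principle, there is no other $F\in\mathcal{H}(\Omega)$ with $F(z,\overline{z})=z$, so the coordinate function $z$ (and likewise $\overline{z}$) simply does not lie in $\mathcal{A}(\D)$; the phrase ``generated via phase space reduction from the polynomials in $z$ and $\overline{z}$'' refers to polynomials upstairs, before reduction. Both your point-separation argument and your commutator computation therefore rest on elements that are not in the algebra.

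The repair is easy but must be made. For (a) $\Rightarrow$ (b) the paper argues differently and more cheaply: $\Omega$ is a Stein manifold (Theorem \ref{thm:G}(a)), so $\mathcal{H}(\Omega)$ separates the points $(z_1,\overline{z_1})\neq(z_2,\overline{z_2})$, hence $\mathcal{A}(\D)$ separates points of $\D$. Alternatively, the single function $f_{1,0}(z,\overline{z})=z/(1-|z|^2)$, which does belong to $\mathcal{A}(\D)$ by \cite[Theorem 3.16]{KRSW}, is injective on $\D$ and would serve as an explicit witness. For (a) $\Rightarrow$ (c) you should evaluate the commutator on $z/(1-|z|^2)$ and $\overline{z}/(1-|z|^2)$ using the asymmetric formula (\ref{eq:*D}) (the $n=1$ term $c_1(\hbar)\,D^1\tilde f\cdot\overline{D}^1 f$ is not symmetric in $f,\tilde f$ for this pair), or simply cite the known noncommutativity of $(\mathcal{A}(\D),\star_{\D})$ from \cite{KRSW}, which is what the paper implicitly does when it derives the equivalence of (a) and (c) from Theorem \ref{thm:main} together with the commutativity statements of Theorems \ref{thm:mainannulus} and \ref{thm:mainpunctureddisk}.
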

  
      \begin{remark}[Parameter space of $\mathcal{A}(X)$]
Combining Theorem \ref{thm:main} and Corollary \ref{cor:1}, and noting  that
$\mathcal{H}(\Omega)$ is isomorphic (as a vector
space) to the space $\mathcal{H}(\C^2)$ of entire functions on $\C^2$, see
\cite{KRSW} as well as \cite[Section 4]{HeinsMouchaRoth3}, it is tempting to think of   the Fr\'echet space
$\mathcal{A}(X)$ as being  ``parametrized'' by the Fr\'echet space
$\mathcal{H}(\C^k)$ with $k$ chosen in such a way that  $X$ is
$\max\{3-k,0\}$--connected:

\renewcommand{\arraystretch}{1.5}
\begin{table}[h]
  \begin{center}
\begin{tabular}{c|c|c}
Connectivity of $X$  & Parameter space of $\mathcal{A}(X)$  & Star product $\star_X$ \\ \hline 
  \quad$1$ &   $\mathcal{H}(\C^2)$ & noncommutative \\
  \quad $2$  &  $\mathcal{H}(\C^1)$ & commutative \\
  $\ge 3$ &  \hspace{0.7cm} $\mathcal{H}(\C^0) \simeq \C$ & trivial
  \end{tabular}\end{center} \caption{Topology of $X$ vs.~''Dimension'' of the
  Parameter space}
\end{table}
\end{remark}

The plan of the present paper is as follows. In a preliminary Section \ref{sec:proofsnew} we prove the basic properties of the star poduct stated  in Proposition \ref{prop:1}, Remark \ref{rem:confinv}  and Proposition \ref{prop:confinv}. These properties will be needed throughout the paper.
In Section \ref{sec:Conf} we discuss and prove a number of topological and complex analytic properties of the manifold $\Omega$. It turns out to be more convenient to pass from $\Omega$   to the biholomorphically equivalent
manifold $$G:=\hat{\C}^2 \setminus \{ (z,z) \, : z \in \hat{\C}\} \, .$$
The set $G$ is a specific example of a configuration space and is also called  \textit{second ordered configuration
  space of the sphere}.
Such spaces  arise in various problems in algebraic topology, knot theory and mathematical physics, see e.g.~\cite{Coh}. Section \ref{sec:Automorphic} is partly based on Section~\ref{sec:Conf} and is concerned with holomorphic \textit{automorphic} functions on $G$. We obtain explicit representations of all functions in $\mathcal{H}(G)$ which are invariant w.r.t.~to certain discrete subgroups of  automorphisms of~$G$. More precisely, those subgroups are induced by  fixed--point free Fuchsian groups $\Gamma$ of automorphisms of the upper half--plane $\H$,  which by classical Riemann surface theory represent the category of all hyperbolic Riemann surfaces. The results of Section \ref{sec:Automorphic}  are among the main technical contributions of this paper; their proofs are given in Section \ref{sec:proofsMoebius}.
Section \ref{sec:Doublyconnected} is devoted to the exceptional case of doubly connected Riemann surfaces, and we prove  Theorem \ref{thm:mainannulus} and  Corollary \ref{cor:mainannulus} as well as Theorem \ref{thm:mainpunctureddisk}  and Theorem \ref{cor:mainpunctureddisk}. In Section~\ref{sec:proofs} we finally conclude with the proof of  Theorem \ref{thm:main} and  Corollary \ref{cor:2}.

\section{Basic properties of the star products} \label{sec:proofsnew}


In this section we establish the basic properties of the continuous Wick star product on the unit disk and hyperbolic Riemann surfaces. In particular, we prove  Proposition \ref{prop:1}, Remark \ref{rem:confinv}  and Proposition \ref{prop:confinv}.

\medskip

Let us first make clear  that for each $f \in \A$ there is a unique $F \in \mathcal{H}(\Omega)$ such that $f(z)=F(z,\overline{z})$ for all $z \in \D$.  In fact, let $f \in \mathcal{A}(\D)$ and suppose $F,\tilde{F} \in \mathcal{H}(\Omega)$ such that $F(z,\overline{z})=f(z)=\tilde{F}(z,\overline{z})$ for all $z \in \D$.
Hence  a well--known variant of the identity principle (\cite[p.~18]{Range}) implies $F=\tilde{F}$, first on $\D \times \D$ and then by connectedness on the entire manifold~$\Omega$.

\begin{proof}[Proof of Proposition \ref{prop:1}]
This follows from the construction of the algebra $(\A,\star_{\D})$ in \cite{KRSW}.
In fact, the conformal invariance of the set $\A$ also follows directly from
Theorem \ref{thm:KRSW}. In order to see this let $f \in \mathcal{A}(\D)$ and  $\varphi \in \Aut(\D)$. By definition, there is $F \in \mathcal{H}(\Omega)$ such that $f(z)=F(z,\overline{z})$ for all $z \in
\D$. Since $\varphi$ is a M\"obius
transformation which maps $\hat{\C}$ bijectively onto $\hat{\C}$,
we immediately see that
$$T_{\varphi}(z,w):=\left(\varphi(z),\frac{1}{\varphi(1/w)} \right)$$
defines a biholomorphic selfmap of $\Omega$. Now, in  view of  the symmetry
property $\overline{\varphi(z)}=1/\varphi(1/\overline{z})$, we get
$$ (f \circ
\varphi)(z)=F\left(\varphi(z),\overline{\varphi(z)}\right)=F(\varphi(z),1/\varphi(1/\overline{z}))
=(F \circ T_{\varphi})(z,\overline{z})$$
for all $z \in \D$.  Since  $F \circ T_{\varphi} \in H(\Omega)$,
we deduce  $f \circ \varphi \in \A$.
\end{proof}

 We can now easily see that the definition of the  algebra $(\mathcal{A}(X),\star_X)$ does not depend
on the choice of the universal covering map $\pi : \D \to X$ and that  $(\mathcal{A}(X),\star_X)$ is a Fr\'echet algebra w.r.t.~the topology inherited from $(\mathcal{A},\star_X)$.

\begin{proof}[Proof of Remark \ref{rem:confinv}]
 Let $X$ be a
hyperbolic Riemann surface and $\pi,\tilde{\pi} : \D \to X$ universal covering
maps. Then $\pi=\tilde{\pi} \circ \varphi$ for some $\varphi \in
\Aut(\D)$. Proposition
    \ref{prop:1} shows that if $f : X \to \C$ is a function, then  $f \circ \pi \in \A$ implies $f \circ
    \tilde{\pi} \in \A$. Therefore, the set $\mathcal{A}(X)=\{f : X \to \C \, |
    \, f \circ \pi \in \A\}$ does not depend on the choice of $\pi$.
 Now let $\Gamma$ denote the covering group of $X$ w.r.t.~the covering $\pi :
 \D \to X$, so $\pi  \circ \gamma=\pi$ for any $\gamma \in \Gamma$. Let $p
 \in X$ and $z_0,z_1 \in \D$ such that $\pi(z_0)=\pi(z_1)=p$. Since $\Gamma$
 acts transitively on the fiber $\pi^{-1}(\{p\})$  there is an 
    $ \gamma \in \Gamma$ such that $\gamma(z_0)=z_1$. This implies
   \begin{eqnarray*} \left[ (f \circ \pi) \star_{\D} (g \circ \pi)\right] (z_1)&=&
    \left[ \left( (f \circ \pi) \star_{\D} (g \circ \pi)\right) \circ \gamma \right] (z_0)=
   \left[ (f \circ \pi \circ \gamma ) \star_{\D} (g\circ \pi \circ \gamma)\right](z_0)
   \end{eqnarray*}
   by Proposition \ref{prop:1}. In view of $\pi \circ \gamma=\pi$, we see that $(f \star_X g)(p)$ is well--defined. If $\tilde{\pi} : \D \to X$ is another universal covering, then $\pi=\tilde{\pi} \circ \varphi$ for some $\varphi \in \Aut(\D)$ and Proposition
   \ref{prop:1} implies
   $$ \left[ (f \circ \pi) \star_{\D} (g \circ \pi) \right] =
 \left[ (f \circ \tilde{\pi} \circ \varphi) \star_{\D} (g \circ \tilde{\pi} \circ \varphi) \right] =
\left[ (f \circ \tilde{\pi}) \star_{\D} (g \circ \tilde{\pi})  \right] \circ \varphi .
   $$
   Hence $ f \star_X g : X \to \C$  does not depend on the choice of $\pi$.
\medskip

   We next show that convergence in $\mathcal{A}(X)$ does not depend on the choice of $\pi$ either.
Let $(f_n) \subseteq \mathcal{A}(X)$ and $f\in \mathcal{A}(X)$ such that $f_n \circ \pi \to f \circ \pi$ in the $\mathcal{A}(\D)$--topology. This means that there are functions $F_n,F \in \mathcal{H}(\Omega)$ such that $(f_n \circ \pi)(z)=F_n(z,\overline{z})$ and $(f \circ \pi)(z)=F(z,\overline{z})$ for all $z \in \D$ and $F_n \to F$ locally uniformly in $\Omega$. If $\tilde{\pi} : \D \to X$ is another universal covering, then $\tilde{\pi}=\pi \circ \gamma$ for some $\gamma \in \Aut(\D)$. As in the proof of Proposition \ref{prop:1},
$$ T_{\gamma}(z,w):=\left( \gamma(z), \frac{1}{\gamma(1/w)} \right)$$
is a biholomorphic selfmap of $\Omega$, and $F_n \circ T_{\gamma}$ converges locally uniformly in $\Omega$ to $F \circ T_{\gamma}$. By definition, this means  $f_n \circ \tilde{\pi} \to f \circ \tilde{\pi}$ in the $\mathcal{A}(\D)$--topology.

\medskip

   Finally, let us convince ourselves that $\mathcal{A}(X)$ is a Fr\'echet space such that $\star_{X} : \mathcal{A}(X) \times \mathcal{A}(X) \to \mathcal{A}(X)$ is continuous. Let $(f_n)$ be a Cauchy sequence in $\mathcal{A}(X)$. This means that there are $F_n \in \mathcal{H}(\Omega)$ such that $g_n(z):=(f_n\circ\pi)(z)=F_n(z,\overline{z})$ on $\D$ and $(F_n)$ is Cauchy in $\mathcal{H}(\Omega)$.
   By completeness of $\mathcal{H}(\Omega)$ there is $F \in \mathcal{H}(\Omega)$ such that $F_n \to F$ locally uniformly in $\Omega$. Then $g(z):=F(z,\overline{z})$ belongs to $\mathcal{A}(\D)$ by definition, and $g_n \to g$ locally uniformly in $\D$.
   Now fix a point $x_0 \in X$, and choose $z_0 \in \D$ such that $\pi(z_0)=x_0$. It is easy to see that $g(z_0)$ only depends on $x_0$, but not on $z_0$. Hence, we can define $f(x_0):=g(z_0)$ and get a function $f : X \to \C$ such that $f \circ \pi=g$ and $f_n \to f$ in the $\mathcal{A}(X)$--topology. Thus $\mathcal{A}(X)$ is a Fr\'echet space. The continuity of  $\star_{X} : \mathcal{A}(X) \times \mathcal{A}(X) \to \mathcal{A}(X)$ follows at once from the definitions. 
 \end{proof}    

\begin{proof}[Proof of Proposition \ref{prop:confinv}]
Let $X$ and $Y$ be hyperbolic Riemann surfaces, $\pi : \D \to
Y$  a universal covering of $Y$  and $\Psi : Y\to X$  a conformal map. Then $\Psi
\circ \pi : \D \to X$ is a universal covering of $X$. Hence we have for
$\Psi^*: \mathcal{A}(X) \to \mathcal{A}(Y)$, $\Psi^*(f)=f \circ \Psi$ that
$$ \Psi^*(\mathcal{A}(X))=\{ \Psi^*(f) : X \to \C \, | \, f \circ \Psi \circ
\pi \in \A\}=\{ g : Y \to \C \, : g \circ \pi \in A\}=\mathcal{A}(Y)\,.  $$
Moreover,
\begin{eqnarray*}
 \Psi^*(f \star_X g) \circ \pi&=& (f \star_X g) \circ \Psi \circ \pi=
(f \circ \Psi \circ \pi) \star_{\D} (g \circ \Psi \circ \pi)\\ &=& \left(\Psi^*(f) \circ
\pi\right) \star_{\D} \left(\Psi^*(g) \circ \pi\right)= \left(\Psi^*(f)
                                                             \star_Y \Psi^*(g)
                                                                   \right)
                                                                   \circ \pi
                                                                   \, ,
\end{eqnarray*}
so $\Psi^* : (\mathcal{A}(X),\star_X) \to
  (\mathcal{A}(Y),\star_Y)$ is an algebra isomorphism.
Finally, it is to see that $\Psi^* : (\mathcal{A}(X),\star_X) \to
  (\mathcal{A}(Y),\star_Y)$ is continuous.
\end{proof}

\section{Complex structure  of the second ordered configuration space of the sphere} \label{sec:Conf}

For some purposes of this paper, it is somewhat more convenient to work on the upper
half--plane~$\H$ instead of the unit disk $\D$. 
Since every conformal map~$T$ from $\D$ onto $\H$ is actually~a M\"obius
transformation  which maps $\hat{\C}$ biholomorphically onto itself,  we are in a position to replace the complex manifold
$\Omega$ by its image under the map 
$$ (z,w) \mapsto \Psi(z,w):= \left( T(z),T(1/w)  \right)
\, .$$
Note that $\Psi$  maps  $\Omega$ biholomorphically onto
$$ G:=\Psi(\Omega)=\hat{\C}^2 \setminus \{ (z,z) \, : \, z \in \hat{\C} \} \, ,$$
and by conformal invariance we have
$$ \mathcal{A}(\H)=\big\{f : \H \to \C \, | \, f(z)=F(z,\overline{z}) \text{ for all } z \in \H \text{ for some } F \in \mathcal{H}(G)\big\} \, .$$

The set $G$ is also called  the second  configuration space
 of the sphere $\hat{\C}$ and is denoted by $\text{Conf}_2(\hat{\C})$, see \cite{Coh}.
Its fundamental group $\pi_1(G)$ is the $2$--strand pure braid group of $\hat{\C}$.

\begin{theorem} \label{thm:G}
  $G$ is a connected simply connected complex manifold. In addition, $G$ 
  \begin{itemize}
\item[(a)]  is a Stein manifold;
\item[(b)]  is not  biholomorphic to $\C^2$ but contains an open subset which is biholomorphic to $\C^2$;
\item[(c)] is diffeomorphic to the real tangent bundle $T_\R S^2$ of the 2-sphere $S^2\subset \R^3$;
\item[(d)]  possesses the density property from Anders\'{e}n-Lempert theory. 
     \end{itemize}
    \end{theorem}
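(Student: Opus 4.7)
The plan is to verify the four items in sequence; item (d) will be the main obstacle, for which general Andersén--Lempert-type results have to be invoked. Connectedness and simple connectedness follow from the Fadell--Neuwirth fibration: the first projection $\pi:G\to\hat{\C}$ is a locally trivial fibration with fibre $\hat{\C}\setminus\{z\}\cong\C$, which is connected and simply connected, as is the base $\hat{\C}$, so the long exact homotopy sequence yields $\pi_0(G)=\pi_1(G)=0$. For (a) I would exhibit the plurisubharmonic exhaustion
$$\rho(z,w):=\log\!\frac{(1+|z|^2)(1+|w|^2)}{|z-w|^2},$$
essentially the negative logarithm of the square of the chordal distance on $\hat{\C}$. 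In the finite chart the term $-\log|z-w|^2$ is pluriharmonic, so the Levi form of $\rho$ equals the positive-definite diagonal matrix $\operatorname{diag}\!\bigl((1+|z|^2)^{-2},(1+|w|^2)^{-2}\bigr)$. The substitutions $z\mapsto 1/z$, $w\mapsto 1/w$ produce completely analogous expressions in the remaining charts, from which one reads off that $\rho$ is smooth and strictly plurisubharmonic on all of $G$ and tends to $+\infty$ as $(z,w)$ approaches the diagonal. Grauert's characterization of Stein manifolds then gives (a).

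For (c) I would construct a diffeomorphism $\Phi:G\to T_{\R}S^2$ from spherical geometry: identifying $\hat{\C}$ with $S^2\subset\R^3$ and writing $d_{S^2}$ for the spherical distance, associate to $(z,w)\in G$ the pair $(p,v)$, where $p=z\in S^2$ and $v\in T_pS^2$ is the tangent vector at $p$ in the direction of the unique shortest geodesic from $p$ to $w$, of length $\cot(d_{S^2}(p,w)/2)$. Since $d_{S^2}(p,w)\in(0,\pi]$ is mapped bijectively onto $[0,\infty)$ by $d\mapsto\cot(d/2)$, vanishing precisely at the antipode, the map $\Phi$ is a smooth bijection; its inverse is supplied by the exponential map, and a routine Jacobian computation upgrades $\Phi$ to a diffeomorphism. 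With (c) in hand, $G$ is homotopy equivalent to $S^2$, hence $H^2(G;\mathbb{Z})\cong\mathbb{Z}\ne 0=H^2(\C^2;\mathbb{Z})$, which shows that $G$ is not even homeomorphic to $\C^2$ (the first half of (b)). For the remaining half of (b), the open set
$$U:=\{(z,w)\in\C^2:w-z\in\C\setminus(-\infty,0]\}\subset G$$
is biholomorphic to $\C^2$ via $(z,w)\mapsto(z,\operatorname{Log}(w-z))$, where $\operatorname{Log}$ denotes the principal branch of the logarithm.

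For (d), the main obstacle: since $\operatorname{PSL}_2(\C)$ acts transitively on ordered pairs of distinct points of $\hat{\C}$ with stabiliser $\C^*$ of $(0,\infty)$, one has a biholomorphism $G\cong\operatorname{PSL}_2(\C)/\C^*$, exhibiting $G$ as a Stein affine homogeneous manifold of a connected semisimple complex Lie group modulo a closed reductive subgroup. The density property for manifolds of this type is covered by the theorems of Kaliman--Kutzschebauch on the algebraic (volume) density property of affine homogeneous spaces, and by related work of Donzelli--Dvorsky--Kaliman, from which (d) follows. A more self-contained verification would proceed by writing down the three $\C$-complete holomorphic vector fields generating the infinitesimal $\operatorname{PSL}_2(\C)$-action and augmenting them with explicit shear vector fields in the two affine charts of $G$, so as to span a dense submodule of all holomorphic vector fields on $G$; producing sufficiently many completely integrable fields to close the Andersén--Lempert argument is where the bulk of the technical work would lie, and it is the principal difficulty in establishing (d).
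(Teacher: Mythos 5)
Your argument for the second half of (b) contains a genuine error. The map $(z,w)\mapsto(z,\operatorname{Log}(w-z))$ does not send $U=\{(z,w)\in\C^2: w-z\in\C\setminus(-\infty,0]\}$ onto $\C^2$: the principal branch $\operatorname{Log}$ maps $\C\setminus(-\infty,0]$ biholomorphically onto the strip $\{\zeta : |\Im \zeta|<\pi\}$, so your map identifies $U$ with $\C\times S$ for a strip $S\cong\D$, and $\C\times\D\not\cong\C^2$ (the former carries nonconstant bounded holomorphic functions). This is not a cosmetic slip that a cleverer elementary formula will fix: $G\cap\C^2$ is the complement of an affine complex line, and the existence of an open subset of such a complement biholomorphic to $\C^2$ is exactly the (nontrivial) existence of Fatou--Bieberbach domains avoiding a complex line, which is what the paper invokes (citing Forstneri\v{c}, Example 4.3.9); alternatively it follows from the Stein + density properties via Varolin. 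You need one of these inputs here.

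The remaining items are correct but follow genuinely different routes from the paper, and these are worth noting. For (a) you produce an explicit strictly plurisubharmonic exhaustion (the negative log of the squared chordal distance) and invoke Grauert, whereas the paper either separates points with functions $g(1/(\gamma(z)-\gamma(w)))$ or observes that $G$ is a closed affine quadric in $\C^3$; your route is more self-contained analytically, the paper's is shorter given its Remark on the Danielewski/quadric model. For (c) you build an explicit fibrewise stereographic diffeomorphism onto $T_\R S^2$ (one should note that smoothness along the antidiagonal, where the geodesic direction degenerates but the length $\cot(d/2)$ vanishes, is exactly smoothness of stereographic projection at its center), while the paper cites the general Grauert-tube/complexification facts from Cieliebak--Eliashberg and Sz\H{o}ke. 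For (d) you realize $G\cong\operatorname{PSL}_2(\C)/\C^*$ and appeal to the density property of affine homogeneous spaces with reductive isotropy (Donzelli--Dvorsky--Kaliman), whereas the paper uses the Danielewski presentation $\{ac=(b^2-1)/4\}$ and the Kaliman--Kutzschebauch theorem for hypersurfaces $uv=p(x)$; both citations legitimately cover $G$, and your homogeneous-space route also explains conceptually why $G$ is so symmetric. Your Fadell--Neuwirth fibration argument for connectedness and simple connectedness is equivalent to the paper's appeal to the homotopy equivalence $G\simeq\hat{\C}$.
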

    
    \begin{remark} With regard to Theorem \ref{thm:G} (b), we note that one can directly show that 
     the open subsets $G_{+}:=G \setminus \{(z,0) \, : z \in \hat{\C}\}$ and $G_-:=G \setminus \{(0,w) \, : w \in \hat{\C}\}$ of $G$ are biholomorphically equivalent to $\C^2$. This is proved in \cite[Proposition 4.3]{HeinsMouchaRoth3}
     for $\Omega$ instead of $G$. In the original $\Omega$--model the set $G_+$ corresponds to $\Omega_+=\Omega \setminus \{(z,\infty) \, : \, z \in \hat{\C}\}$ and $G_-$ to $\Omega_-:=\Omega \setminus \{(\infty,w) \, : \, w \in \hat{\C}\}$.
     The sets $\Omega_+$ and $\Omega_-$ play a crucial role for the Fr\'echet space structure of $\mathcal{H}(\Omega)$ (see \cite{HeinsMouchaRoth3}), for the study of invariant differential operators on $\mathcal{H}(\Omega)$ (see \cite{HeinsMouchaRoth1}) and for the spectral theory of the invariant Laplacian on $\Omega$ (see \cite{HeinsMouchaRoth2,Annika}).   
      \end{remark}
    
\begin{remark} \label{rem:HomSur}
            In complex geometry, the manifold $G$ often appears in the biholomorphically equivalent form of the complex two--sphere  $$S^2_\C := \{(z_1,z_2,z_3)\in\C^3\,|\, z_1^2+z_2^2+z_3^2=1\} \, ,$$ and thus 
            as a complex homogeneous surface (see the classification \cite{McKay}), or as an entire Grauert tube (see \cite{To}).
A coordinate change shows that $G$ is also biholomorphic to
\begin{equation} \label{eq:Dani}
  D:=\left\{(a,b,c)\in \C^3\,:\, b^2-4ac = 1\right\} \, ,
\end{equation}
  which is  known as a \textit{Danielewski surface}  
when written in the form
$\{(a,b,c)\in \C^3\,:\, ac = (b^2-1)/4\}$. (A Danielewski surface has the form $\{(x,y,z)\in\C^3\,:\, x^n y=p(z)\}$ for a polynomial $p$ with simple roots.)

In fact,  the mapping
 \[ f : G \to D \, , \quad (z,w) \mapsto \left(\frac{1}{z-w}, \frac{z+w}{z-w}, \frac{zw}{z-w}\right).\]
  is biholomorphic (\cite[Section 10.1]{McKay}.
The change of coordinates 
\begin{equation}\label{ch_co} z_1:=ia+ic,\quad z_2:=b,\quad z_3:=-a+c.\end{equation}
 yields the homogeneous quadric $S_\C^2$.
\end{remark}
    
The characterization of $G$ in Theorem \ref{thm:G} (c) suggests that our quantization is related to a quanitzation of the 2-sphere $S^2$, where $S^2$ is regarded as the configuration space and 
$T_\R S^2$ as the phase space. Indeed, certain holomorphic functions on $S^2_\C$ appear in this context in \cite{HM} (with corrections \cite{HM2, RJ}), see \cite[Theorem 5]{HM} for the space of holomorphic functions. 
This space is conformally invariant in the sense of \cite[Lemma 3]{HM}.\\
Our construction is related to the 2-sphere insofar as the symmetry of $\mathcal{A}(\D)$ with respect to the transform $z\mapsto 1/z$ (see \textcolor{red}{Remark \ref{Z2_symmetry}}) 
shows that we are actually considering functions that are defined on two discs at the same time, the unit disc $\D$ and the disc $\hat{\C}\setminus \overline{\D}$, whose union 
gives the 2-sphere $\hat{\C}$ (minus $\partial \D$).


\begin{remark}\label{aut_omega}Theorem \ref{thm:G} (d) implies that $\Aut(G)$ is infinite-dimensional. This can also be seen directly by verifying that 
the following mappings belong to $\Aut(G)$: 
\[ (z,w)\mapsto \left(\alpha z + g\left(\frac1{z-w}\right), \alpha w + g\left(\frac1{z-w}\right)\right),\]
 where $\alpha\in \C^*$ and $g:\C\to \C$ is an entire mapping. 
 Furthermore, $\Aut(G)$ also contains the mappings
\begin{equation} \label{eq:autG1}
 (z,w)\mapsto (w,z), \quad (z,w)\mapsto \left(\varphi(z), \varphi(w)\right), \quad \varphi\in\Aut(\hat{\C}).
    \end{equation}
 In \cite[Theorem 5.1]{HeinsMouchaRoth3} it is shown that these automorphisms are the only ones in the infinite--dimensional group $\Aut(G)$ which commute with the action of the natural Laplacian on $G$. Even though we do not need this result, it partly explains why we only consider automorphisms on $G$ of the form (\ref{eq:autG1}).
\end{remark}

\begin{remark}
The complex (compact K\"ahler)  manifold $\hat{\C} \times \hat{\C}$ can be embedded into 
$\C P^3$ via Segre's embedding $\varphi([z_0:z_1],[w_0:w_1])= [z_0w_0:z_0w_1:z_1w_0:z_1w_1]$.
We have $\varphi(\hat{\C}\times\hat{\C})=\{[z_0:z_1:z_2:z_3]\,|\, z_0z_3-z_1z_2=0\}$ and thus we obtain yet another way to represent $G$ as 
 \[\varphi(G)=\{[z_0:z_1:z_2:z_3]\,|\, z_0z_3-z_1z_2=0, z_1\not=z_2\}.\]
 In this context we can consider the group of all rational automorphisms of $G$, which turns out to be generated by all automorphisms from Remark \ref{aut_omega} with $g$ being a polynomial,
 see \cite[Theorem 4]{6}.
\end{remark}

\begin{remark}
 An automorphism $\tau$ of a complex manifold is called a ``generalized translation'' if for any holomorphically convex compact $K\subset G$ there exists $m\in \N$ with 
 $\tau^m(K)\cap K = \emptyset$ and $\tau^m(K)\cup K$ is holomorphically convex.  A Stein manifold with density property which has a generalized translation possesses universal automorphisms in the sense of \cite[Theorem 3.6]{AW}. 
 Our manifold $G$ has these three properties. A generalized translation (for certain Danielewski surfaces) is given in \cite[Example 4.3]{AW}. On $G$, this simply corresponds to 
 $\tau(z,w)=(z+\alpha, w+\alpha),$ $\alpha\in\C^*$.
\end{remark}

\begin{remark}Consider the set of all $g\in \mathcal{A}(\D)$ with $g(z)=g(\overline{z})$, which corresponds to all $F\in\mathcal{H}(\Omega)$ with $F(z,w)=F(w,z)$. 
As $M:=\Omega/((z,w)\sim (w,z))$ is again a complex manifold, the latter set simply corresponds to $\mathcal{H}(M)$. 
With \cite[Section 10.2]{McKay} and the change of coordinates from \eqref{ch_co} wee see that $M$ is biholomorphic to
\[\{[z_1:z_2:z_3]\in \C P^2\,:\, z_1^2+z_2^2+z_3^2\not=0\}.\]
\end{remark}
    
 \begin{proof}[Proof of Theorem \ref{thm:G}]
$G$ is an open subset of $\hat{\C}^2$ and pathconnected and hence connected.
   It is known that $G$ is homotopy equivalent to $\hat{\C}$, see \cite[Example
  2.4 (1)]{Coh}. Thus $\pi_1(G)$ is trivial and $G$ is simply connected.
\smallskip

(a) Using functions $f:\Omega\to\C$ of the form $f(z,w)=g(1/(\gamma(z)-\gamma(w))$ with  $\gamma$ as a Moebius transform and $g\in\mathcal{H}(\C)$, it is straightforward to check that $G$ is Stein (see \cite[Def.~2.2.1]{7}).
This also follows from Remark \ref{rem:HomSur} since closed submanifolds of $\C^n$ are always Stein.

\smallskip

(b) Since $G$ is homotopy equivalent to $\hat{\C}$, it is not biholomorphic to $\C^2$.
It is is known that there exist Fatou--Bieberbach domains in $\C^2$ avoiding a complex line, see \cite[Example 4.3.9]{7}, so $G \cap \C^2$ contains a Fatou--Bieberbach domain
(i.e.~a proper subset of $\C^2$ which is biholomorphically equivalent to $\C^2$).
This statement also follows more generally from (a), (d), and \cite[Corollary 4.1]{Va}.

\smallskip


\smallskip

(c) The diffeomorphic equivalence between the real tangent bundle of a real analytic manifold and its complexification is a more general fact in the sense of \cite[p.104]{CE}. 
An explicit diffeomorphism for our case can be found in \cite[p.410]{Sz}.

\smallskip

(d) Remark \ref{rem:HomSur} shows that $G$ can be represented as $\{(a,b,c)\in\C^3\,:\, ac=(b^2-1)/4\}$. The density property now follows from the main theorem in \cite{KK} with $p(x)=(x^2-1)/4$.
 \hide{
%
%
}
\end{proof}

\section{M\"obius--invariant holomorphic functions on the second configuration space of the sphere} \label{sec:Automorphic}

In order to  describe the algebras $\mathcal{A}(X)$ for hyperbolic Riemann
surfaces $X$ which are not simply connected, we are going to use  some few basic facts about
Fuchsian groups, see e.g.~\cite{Hub,Th,Katok}.
Recall that a \textit{Fuchsian group} is a discrete M\"obius group $\Gamma$ acting  on the upper half--plane $\H$. In particular,
every Fuchsian group  is a subgroup of the group $\Aut(\H)$ of all conformal
automorphisms of $\H$  and
the  quotient  $\H/\Gamma$ is a hyperbolic Riemann surface.
In fact, every hyperbolic Riemann surface arises in this way. 
If $X$ is a hyperbolic Riemann surface and $\pi : \H \to X$ is a universal
covering map, then its group $\Gamma$ of covering transformations is a
\textit{fixed point free} Fuchsian group, that it, every $\gamma \in \Gamma
\setminus \{id\}$ has no fixed points \textit{in} $\H$. In this case, $X$ is
conformally equivalent  to $\H/\Gamma$, and every  function $ g : X \to \C$  can be identified with the
$\Gamma$--invariant function $f:=g \circ \pi : \H \to \C$ and vice versa.

\medskip

We conclude that in order to find $\mathcal{A}(\H/\Gamma)$ we only need to identify the $\Gamma$--invariant functions in $\mathcal{A}(\H)$.
Now every $\gamma \in \Aut(\H)$ induces in a natural way an element $\hat{\gamma}$ of the group $\Aut(G)$ of biholomorphic self--maps of $G$ via $$\hat{\gamma}(z,w)=(\gamma(z),\gamma(w))\, .$$

\begin{lemma} \label{lem:2_1}
  Let $f \in \mathcal{A}(\H)$ and $F \in \mathcal{H}(G)$ such that $f(z)=F(z,\overline{z})$. Suppose that  $\gamma \in \Aut(\H)$. Then 
 $f$ is $\gamma$--invariant if and only if 
 $F$ is $\hat{\gamma}$--invariant.
  \end{lemma}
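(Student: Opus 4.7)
The plan is to exploit two facts: first, any $\gamma\in\Aut(\H)$ is a M\"obius transformation $\gamma(z)=(az+b)/(cz+d)$ with real coefficients, which forces the reflection symmetry $\overline{\gamma(z)}=\gamma(\overline{z})$; and second, the map $\hat\gamma(z,w)=(\gamma(z),\gamma(w))$ is a biholomorphism of $\hat\C^2$ that preserves the diagonal, hence restricts to an element of $\Aut(G)$. Both directions then follow from the computation
\[
(F\circ\hat\gamma)(z,\overline{z})=F\bigl(\gamma(z),\gamma(\overline{z})\bigr)=F\bigl(\gamma(z),\overline{\gamma(z)}\bigr)=f(\gamma(z))\qquad(z\in\H),
\]
together with the uniqueness statement already recorded in the paper: a function in $\mathcal{H}(G)$ is determined by its values on the totally real slice $\{(z,\overline{z}):z\in\H\}$.

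For the implication ``$F$ is $\hat\gamma$-invariant $\Rightarrow$ $f$ is $\gamma$-invariant'' I would simply insert $F\circ\hat\gamma=F$ into the display above to obtain $f(\gamma(z))=F(z,\overline{z})=f(z)$, no further ingredients required.

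For the converse, assuming $f\circ\gamma=f$, the display shows that the two holomorphic functions $F$ and $F\circ\hat\gamma$ on $G$ coincide at every point of the form $(z,\overline{z})$ with $z\in\H$. At this stage I would invoke the identity-principle argument already used in Section~\ref{sec:proofsnew} to establish uniqueness of $F$: agreement on $\{(z,\overline{z}):z\in\H\}$ forces agreement on $\H\times\H^-$ (where $\H^-$ denotes the lower half-plane), and then by the ordinary identity principle on the connected manifold $G$ the equality $F\circ\hat\gamma=F$ propagates to all of $G$.

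The only step that is not entirely formal is checking that $\hat\gamma$ genuinely maps $G$ to $G$, i.e.\ that the extension of $\gamma$ to $\hat\C$ sends distinct points to distinct points; this is immediate because $\gamma$ is a bijection of $\hat\C$, so $\hat\gamma$ preserves the complement of the diagonal. I do not anticipate any real obstacle here: the lemma is essentially a bookkeeping statement linking the symmetries of $f$ on $\H$ to those of its holomorphic extension $F$ on $G$ via the reality condition $\overline{\gamma(z)}=\gamma(\overline{z})$.
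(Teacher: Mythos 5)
Your proposal is correct and follows essentially the same route as the paper: the key identity $(f\circ\gamma)(z)=F(\gamma(z),\gamma(\overline{z}))=(F\circ\hat{\gamma})(z,\overline{z})$, resting on the reality symmetry $\overline{\gamma(z)}=\gamma(\overline{z})$ of real M\"obius transformations, combined with the uniqueness of the holomorphic extension via the identity principle. The paper compresses the converse direction into the single word ``immediate,'' whereas you spell out the propagation from the totally real slice to all of $G$; this is exactly the uniqueness argument the paper records at the start of Section~\ref{sec:proofsnew}.
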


  \begin{proof}
    This is immediate from $$(f \circ \gamma)(z)=F\left(\gamma(z),\overline{\gamma(z)}\right)=F(\gamma(z),\gamma(\overline{z}))=(F \circ \hat{\gamma})(z,\overline{z})$$ which is based on the crucial property that every
    $\gamma \in \Aut(\H)$ not only maps $\H$ onto $\H$, but is symmetric w.r.t.~the real axis, that is, $\overline{\gamma(z)}=\gamma(\overline{z})$.
  \end{proof}

  We are therefore led to the following problem.

\begin{problem} \label{prob:1}
  Let $\Gamma$ be a  Fuchsian group acting on the upper half--plane $\H$.
  Find the set of all $F \in \mathcal{H}(G)$ which are invariant under the subgroup
    $$ \hat{\Gamma}:=\{\hat{\gamma} \, : \, \gamma \in \Gamma\}$$
    of $\Aut(G)$.
\end{problem}

It turns out that the so--called limit set of the Fuchsian group $\Gamma$ plays
the crucial role here.
 The limit set of $\Gamma$, denoted by $\Lambda(\Gamma)$, is the set of
accumulation points of the $\Gamma$--orbits of any point $z \in \H$. Since the
action of $\Gamma$ is properly discontinuous, $\Lambda(\Gamma) \subseteq
\partial \H:=\R \cup \{ \infty\}$.
 A~Fuchsian group $G$ is said to be elementary if $\Lambda(\Gamma)$  consists of exactly zero, one or  two elements.
Otherwise, it is called non--elementary. It is well--known (see
e.g.~\cite[Corollary 3.4.6]{Hub}) that the limit set of any non--elementary
Fuchsian group is either $\partial \H$ or a Cantor set.

    \begin{theorem} \label{thm:gen}
      Let $\Gamma$ be a non--elementary Fuchsian group. Then
every $\hat{\Gamma}$--invariant holomorphic function on $G$ is constant.
        \end{theorem}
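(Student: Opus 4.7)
\textbf{Proof plan for Theorem \ref{thm:gen}.} The strategy is to first show that any $\hat{\Gamma}$--invariant $F \in \mathcal{H}(G)$ is constant on slices $\{w = \alpha\}$ for $\alpha$ in a sufficiently rich subset of $\partial \H$, and then to upgrade this sliced information to the whole of $G$ via the identity principle together with the fact that $\hat{\C}$ admits no nonconstant holomorphic functions.

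First I will exploit the \emph{hyperbolic} elements of $\Gamma$. Since $\Gamma$ is non--elementary it contains hyperbolic elements, and a classical result (see e.g.~Katok \cite{Katok} or \cite[Thm.~3.4.4]{Hub}) says that the set
\[
  S := \{\alpha \in \partial \H \, : \, \alpha \text{ is a fixed point of some hyperbolic } \gamma \in \Gamma\}
\]
is dense in $\Lambda(\Gamma)$. Fix such an $\alpha$ and let $\gamma \in \Gamma$ be hyperbolic with $\gamma(\alpha)=\alpha$, attracting fixed point $\beta\in\partial\H$, and $\alpha\ne\beta$. Since $\hat{\gamma}(z,\alpha)=(\gamma(z),\alpha)$, the function $h(z):=F(z,\alpha)$ is holomorphic on $\hat{\C}\setminus\{\alpha\}$ and satisfies $h\circ \gamma = h$. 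Because $\gamma$ acts on $\hat{\C}$ as a M\"obius transformation with repelling fixed point $\alpha$ and attracting fixed point $\beta$, iteration gives $\gamma^n(z) \to \beta$ for every $z\in\hat{\C}\setminus\{\alpha\}$. Continuity of $h$ at $\beta$ then yields
\[
  h(z) \;=\; \lim_{n\to\infty} h(\gamma^n(z)) \;=\; h(\beta), \qquad z\in\hat{\C}\setminus\{\alpha\},
\]
so that $F(\cdot,\alpha)$ is constant on $\hat{\C}\setminus\{\alpha\}$ for every $\alpha \in S$.

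Next I will upgrade this to a statement on all of $G$. Consider the holomorphic function $F_1:=\partial F/\partial z$ on $G$ (using the complex structure of $\hat{\C}$ in the first factor, extended through $\infty$ by the usual local chart). The previous step gives $F_1(z,\alpha)=0$ for every $\alpha\in S$ and $z\neq \alpha$. Fix any $z_0\in \H$. Then $w\mapsto F_1(z_0,w)$ is holomorphic on $\hat{\C}\setminus\{z_0\}$, which is connected, and it vanishes on the set $S \subseteq \partial\H \subseteq \hat{\C}\setminus\{z_0\}$. Since $S$ is dense in $\Lambda(\Gamma)$ and $\Lambda(\Gamma)$ is infinite (being either $\partial\H$ or a Cantor set), $S$ has accumulation points in $\hat{\C}\setminus\{z_0\}$, and the identity principle forces $F_1(z_0,w)=0$ for every $w\neq z_0$. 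Since this holds for all $z_0$ in the open set $\H$, and $F_1$ is holomorphic on the connected manifold $G$ (Theorem \ref{thm:G}), the identity principle on $G$ gives $F_1\equiv 0$.

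Hence $F(z,w)$ is independent of its first argument. Setting $f(w):=F(z,w)$ (for any $z\neq w$) defines $f$ unambiguously on $\hat{\C}$; for any $w_0\in\hat{\C}$ we may choose a fixed $z_0\neq w_0$ and observe that $F(z_0,\cdot)$ is holomorphic in a neighbourhood of $w_0$, so $f\in\mathcal{H}(\hat{\C})$. Since $\hat{\C}$ is compact, $f$ is constant, and therefore $F$ is constant, as required.

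The main obstacle is the third step, which bridges the sliced rigidity statement and the global constancy on $G$. It is here that we use in a crucial way both the non--elementary hypothesis (via the accumulation of hyperbolic fixed points) and the complex--analytic structure of $G$ (via the identity principle on the connected manifold). The dynamical argument using the attracting/repelling behaviour of a single hyperbolic $\gamma$ replaces the, possibly more pedestrian, Laurent expansion approach and avoids any change of coordinates.
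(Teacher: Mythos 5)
Your proof is correct, and while it follows the same broad skeleton as the paper's argument (first kill the slices through fixed points of hyperbolic elements, then globalize using the abundance of such fixed points), both halves are executed differently. For the slice step, the paper works at the \emph{second} fixed point $z_0$ of $\gamma$: differentiating the invariance relation there gives $g'(z_0)\gamma'(z_0)=g'(z_0)$ with $\gamma'(z_0)\neq 1$, and an induction on derivatives shows all Taylor coefficients of $g$ at $z_0$ beyond the constant term vanish. Your dynamical argument ($\gamma^n(z)\to\beta$ for all $z\neq\alpha$, hence $h\equiv h(\beta)$ by continuity) reaches the same conclusion without the induction; just make explicit that you may replace $\gamma$ by $\gamma^{-1}$ to arrange that $\alpha$ is the repelling fixed point. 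For the globalization, the paper first shows $F\equiv C$ on $(\Lambda(\Gamma)\times\Lambda(\Gamma))\cap G$ using the density of hyperbolic fixed points in $\Lambda(\Gamma)$, and then uses that $\Lambda(\Gamma)$ is perfect to conclude that \emph{all} partial derivatives of $F$ vanish at a point of $\Lambda(\Gamma)\times\Lambda(\Gamma)$, whence $F$ is constant by the identity principle. You instead show $\partial F/\partial z\equiv 0$ on $G$ via a one--variable identity--principle argument and then observe that $F$ descends to a holomorphic function on the compact surface $\hat{\C}$, which must be constant. Your route needs slightly weaker input -- only that the set $S$ of hyperbolic fixed points is infinite and hence accumulates somewhere in $\partial\H$, rather than the density of $S$ in $\Lambda(\Gamma)$ together with the perfectness of $\Lambda(\Gamma)$ -- and it trades the ``all Taylor coefficients vanish'' step for Liouville on the sphere; it also uses only the slices $F(\cdot,\alpha)$, whereas the paper uses both $F(\cdot,w_0)$ and $F(w_0,\cdot)$. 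The only cosmetic point is that $F_1=\partial F/\partial z$ requires a chart near $z=\infty$, but your argument really only needs $F_1$ on $G\cap(\C\times\hat{\C})$ followed by continuity at $\infty$, so nothing is lost.
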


 \begin{corollary} \label{cor:intro1}
  Suppose that $X$ is a hyperbolic Riemann surface
  with  a non--elementary covering group.   Then $\mathcal{A}(X)$ consists only of constant functions.
  \end{corollary}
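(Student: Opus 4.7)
The plan is to chain together the three ingredients just established: the identification of $\mathcal{A}(X)$ with $\Gamma$--invariant elements of $\mathcal{A}(\H)$, Lemma~\ref{lem:2_1}, and Theorem~\ref{thm:gen}. The proof should be short; virtually all the work has been done upstream.

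First I would reduce the statement to the half--plane model. Since $X$ is hyperbolic, I may pick a universal covering $\pi : \H \to X$ whose group of deck transformations is a fixed point free Fuchsian group $\Gamma$, non--elementary by hypothesis. By the very definition of $\mathcal{A}(X)$ combined with the observation made just before Problem~\ref{prob:1}, a function $g : X \to \C$ lies in $\mathcal{A}(X)$ if and only if $f := g \circ \pi$ lies in $\mathcal{A}(\H)$, and any such $f$ is automatically $\Gamma$--invariant (since $\pi \circ \gamma = \pi$ for all $\gamma \in \Gamma$). Conversely, every $\Gamma$--invariant $f \in \mathcal{A}(\H)$ descends to some $g \in \mathcal{A}(X)$. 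So showing $\mathcal{A}(X)$ consists only of constants reduces to showing that every $\Gamma$--invariant $f \in \mathcal{A}(\H)$ is constant on $\H$.

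Next I would pass to the function $F \in \mathcal{H}(G)$ associated with $f$, i.e.\ the unique holomorphic function on $G$ with $f(z) = F(z,\overline{z})$ for $z \in \H$ (existence and uniqueness follow from the $\H$--version of Theorem~\ref{thm:KRSW} and the identity principle). Applying Lemma~\ref{lem:2_1} to each $\gamma \in \Gamma$ translates the $\Gamma$--invariance of $f$ into the $\hat{\Gamma}$--invariance of $F$, where $\hat\Gamma = \{\hat\gamma : \gamma \in \Gamma\} \subseteq \Aut(G)$. Since $\Gamma$ is non--elementary, Theorem~\ref{thm:gen} forces $F$ to be constant on $G$, and consequently $f$ is constant on $\H$ and $g$ is constant on $X$.

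There is no real obstacle here beyond bookkeeping: the only point requiring a moment of care is checking that one may legitimately pull back through the covering, i.e.\ that $\mathcal{A}(X) \to \{\Gamma\text{--invariant elements of } \mathcal{A}(\H)\}$, $g \mapsto g\circ\pi$, is a bijection — but this is exactly what Remark~\ref{rem:confinv} guarantees (the set $\mathcal{A}(X)$ is independent of the choice of covering, and $\Gamma$ acts transitively on fibres of $\pi$). Everything else is a direct invocation of the previous results.
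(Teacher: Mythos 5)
Your proposal is correct and follows exactly the route the paper intends: the corollary is stated as an immediate consequence of the identification of $\mathcal{A}(X)$ with the $\Gamma$--invariant functions in $\mathcal{A}(\H)$ (the discussion preceding Problem~\ref{prob:1}), Lemma~\ref{lem:2_1}, and Theorem~\ref{thm:gen}, which is precisely the chain you assemble. The bookkeeping you supply (uniqueness of $F$, well--definedness of the pullback through $\pi$) is accurate and matches what the paper establishes upstream.
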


  Corollary \ref{cor:intro1} applies in particular to all  compact Riemann surfaces of higher genus.
  We wish to point out that this strong \textit{rigidity result} only depends on the
  function--theoretic description
  of $\A$ in terms of $\mathcal{H}(\Omega)$ and conformal invariance.

  \medskip
  
  We now turn to the remaining cases, when $X$ is not simply connected and the Fuchsian group $\Gamma$ is
  elementary. Since we are only interested in hyperbolic Riemann surfaces, 
  we may restrict ourselves to the case that $\Gamma$ is a covering group, so it  acts freely on
  $\H$. Recalling that two  fixed--point free Fuchsian
  groups $\Gamma$ and $\Gamma'$ are conjugated if and only if the
  corresponding Riemann surfaces $\D/\Gamma$ and $\D/\Gamma'$ are conformally
  equivalent, the  well--known classification of elementary
    Fuchsian groups  (see \cite[Proposition 3.1.2]{Hub}) implies that  there are only two cases to be considered. Either $\Gamma$ is conjugate to a group generated by exactly one parabolic element (one fixed point on $\partial \H$) or 
by exactly one hyperbolic element (two fixed points on $\partial \H$), that is, $\D/\Gamma$ is either the punctured unit disk or a proper annulus.

\medskip

We first consider the case of a proper annulus $A_R=\{z \in \C \, : \, 1/R<|z|<R\}$ for some $R>1$.
Here $A_R=\H/\Gamma$, where $\Gamma$ is the cyclic group generated by $z \mapsto c\cdot  z$ with
$\log c=\pi^2/\log R>0$.

\begin{theorem}[Annulus] \label{thm:annulus}
  Let $\Gamma=\{z \mapsto c^n z \, : \, n \in \mathbb{Z}\}$ for some
  $c>1$.
Then $F\in \mathcal{H}(G)$   is $\hat{\Gamma}$--invariant if and only if 
    $$ F(z,w)=g\left( \frac{w}{z-w} \right)$$
    for some $g \in \mathcal{H}(\C)$.
  \end{theorem}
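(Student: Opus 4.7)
The plan is to introduce explicit coordinates on an open dense subset of $G$ in which the $\hat{\Gamma}$-action becomes a pure dilation in one variable and trivial in the other, so that $\hat{\Gamma}$-invariance reduces to a one-variable Laurent series computation.

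First I would verify that $t(z,w):=w/(z-w)$ extends to a holomorphic function $t:G\to\C$. Holomorphy on $G\cap\C^2$ is immediate; holomorphy across the two ``lines at infinity'' of $G$ is best seen by writing $t$ in bi-projective coordinates as
$$
t \;=\; \frac{w_0 z_1}{z_0 w_1 - z_1 w_0},
$$
whose denominator is precisely the bi-homogeneous defining equation of the diagonal removed from $G$. Combining $t$ with $u(z,w):=z-w$, the map
$$
\Phi : G \cap \C^2 \longrightarrow \C \times \C^*, \qquad \Phi(z,w) := \left(\tfrac{w}{z-w},\, z-w\right),
$$
is a polynomial biholomorphism with inverse $(t,u)\mapsto(u(1+t),\,tu)$. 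In these coordinates $\hat{\gamma}_n(z,w)=(c^n z, c^n w)$ reads $(t,u)\mapsto(t, c^n u)$, so $t$ is $\hat{\Gamma}$-invariant. This already yields the ``if'' direction: for any $g\in\mathcal{H}(\C)$ the composition $g\circ t$ is a $\hat{\Gamma}$-invariant element of $\mathcal{H}(G)$.

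For the converse, let $F\in\mathcal{H}(G)$ be $\hat{\Gamma}$-invariant and put $\tilde{F}:=F\circ\Phi^{-1}\in\mathcal{H}(\C\times\C^*)$. For each fixed $t_0\in\C$ the slice $u\mapsto\tilde{F}(t_0,u)$ is a holomorphic function on $\C^*$ invariant under $u\mapsto c u$. Laurent-expanding $\tilde{F}(t_0,u)=\sum_{n\in\mathbb{Z}} a_n(t_0)\, u^n$ and comparing coefficients gives $a_n(t_0)(c^n-1)=0$ for every $n\in\mathbb{Z}$, and $c>1$ forces $a_n(t_0)=0$ for $n\neq 0$. Hence $\tilde{F}(t_0,u)=g(t_0):=\tilde{F}(t_0,1)$, and $g\in\mathcal{H}(\C)$ since $\tilde{F}$ is jointly holomorphic. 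Thus $F = g\circ t$ on the open dense connected set $G\cap\C^2$, and the identity principle on the connected manifold $G$ (Theorem~\ref{thm:G}) extends this equality to all of $G$.

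The main conceptual step is recognising $t$ as a \emph{single} holomorphic invariant that generates all $\hat{\Gamma}$-invariants, and the main technical point is the clean holomorphic extension of $t$ across the two lines at infinity of $G$, which the bi-projective expression handles at once. The Laurent-series rigidity is then routine, using only that $c>1$ so that $c^n=1$ holds solely for $n=0$.
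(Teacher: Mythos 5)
Your proposal is correct and follows essentially the same route as the paper's proof: an explicit rational change of variables in which the action $(z,w)\mapsto(c^nz,c^nw)$ becomes a dilation in a single coordinate, followed by the Laurent--series argument that all modes $a_n$ with $n\neq 0$ vanish because $c^n\neq 1$. The only (cosmetic) differences are that the paper works with the coordinates $(w/z,\,w)$ and recovers the entire function at the end via $g(s)=F(1-1/s,1)$, whereas you use $(w/(z-w),\,z-w)$ so that the invariant coordinate is already globally holomorphic on $G$; note only that your bi-projective display actually computes $z/(w-z)=-1-w/(z-w)$, an affine reparametrization of the intended invariant that changes nothing.
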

On the Danielewski surface $D$ (see (\ref{eq:Dani})) the functions described in Theorem \ref{thm:annulus} are exactly all holomorphic functions $(a,b,c) \mapsto F(a,b,c)$ which depend only on $b$.

\medskip

  We finally  consider the case of the punctured unit disk $\D^*:=\D \setminus\{0\}=\H/\Gamma$, where
$\Gamma$ is the cyclic group generated by right--translation $z \mapsto z+1$ and associated universal covering
$z \mapsto e^{2\pi i z}$.

\begin{theorem} \label{thm:punctureddisk}
  Let $\Gamma:=\{z \mapsto z+n \, : \, n \in \mathbb{Z} \}$. Then
 $F \in \mathcal{H}(G)$ is $\hat{\Gamma}$--invariant if and only if 
    $$ F(z,w)=g\left( \frac{1}{z-w} \right)$$
    for some $g \in \mathcal{H}(\C)$. 
    
  \end{theorem}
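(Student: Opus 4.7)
The ``if'' direction is immediate since $\Phi(z,w):=1/(z-w)$ is $\hat\Gamma$-invariant and extends holomorphically across the two ``lines at infinity'' $L_1:=\{(\infty,w_0):w_0\in\C\}$ and $L_2:=\{(z_0,\infty):z_0\in\C\}$ of $G$, vanishing there. For the converse, given an $\hat\Gamma$-invariant $F\in\mathcal{H}(G)$, the plan is to Fourier-decompose $F$ along the $\mathbb{Z}$-action on the affine part $G\cap\C^2$ and then invoke the holomorphy of $F$ across $L_1$ and $L_2$ to annihilate every nonzero Fourier mode. In the coordinates $u=z-w\in\C^*$, $v=w\in\C$ on $G\cap\C^2$ the action reads $(u,v)\mapsto(u,v+n)$, so $F=H(u,e^{2\pi i v})$ for some $H$ holomorphic on $\C^*\times\C^*$, and
\begin{equation*}
F(z,w)=\sum_{k\in\mathbb{Z}} a_k(z-w)\,e^{2\pi i k w},\qquad a_k(u)=\int_{v_0}^{v_0+1}F(u+v,v)\,e^{-2\pi i k v}\,dv;
\end{equation*}
here $a_k\in\mathcal{H}(\C^*)$, and the integral is independent of $v_0\in\C$ by $\mathbb{Z}$-periodicity of $v\mapsto F(u+v,v)e^{-2\pi i k v}$.

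To exploit holomorphy at $L_1$, I would take $v_0=0$ and use the chart $(z',w)=(1/z,w)$, in which $F=\widetilde F_1(z',w)$ is holomorphic near $\{0\}\times[0,1]$. Substituting $1/(u+v)=\tau/(1+v\tau)$ with $\tau=1/u$ gives
\begin{equation*}
a_k(1/\tau)=\int_0^1 \widetilde F_1\!\left(\frac{\tau}{1+v\tau},\,v\right)e^{-2\pi i k v}\,dv,
\end{equation*}
whose integrand is holomorphic in $\tau$ at $\tau=0$ uniformly for $v\in[0,1]$; hence each $a_k$ extends holomorphically to $u=\infty$. To exploit holomorphy at $L_2$, shift the contour to $v_0=-u$, so $v=s-u$ with $s\in[0,1]$, which keeps $z=s$ bounded while pushing $w=s-u\to\infty$. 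Factoring out $e^{2\pi i k u}$ and using the chart $(z,w')=(z,1/w)$ with $F=\widetilde F_2(z,w')$ produces
\begin{equation*}
a_k(u)=e^{2\pi i k u}\,h_k(u),\qquad h_k(u):=\int_0^1 \widetilde F_2\!\left(s,\frac{1}{s-u}\right)e^{-2\pi i k s}\,ds,
\end{equation*}
and the analogous reasoning shows that $\tau\mapsto h_k(1/\tau)$ is holomorphic at $\tau=0$.

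Combining both steps, $\widetilde a_k(\tau):=a_k(1/\tau)=e^{2\pi i k/\tau}\,h_k(1/\tau)$ is entire in $\tau$ for every $k\in\mathbb{Z}$. For $k=0$ this imposes nothing, but for $k\neq 0$ the factor $e^{2\pi i k/\tau}$ carries an essential singularity at $\tau=0$ while $h_k(1/\tau)$ is holomorphic there, and reconciling the two is the crux of the proof. If the Taylor expansion $h_k(1/\tau)=\sum_{m\geq 0} d_m\tau^m$ had a first nonzero coefficient $d_M$, then along the sequence $\tau_n:=i\,\mathrm{sgn}(k)/n\to 0$ one computes $|e^{2\pi i k/\tau_n}|=e^{2\pi|k|n}$ and $|h_k(1/\tau_n)|\sim|d_M|/n^M$, so $|\widetilde a_k(\tau_n)|\to\infty$, contradicting boundedness of the entire function $\widetilde a_k$ near $0$. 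Therefore $h_k\equiv 0$, and consequently $a_k\equiv 0$ for every $k\neq 0$.

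It follows that $F(z,w)=a_0(z-w)$ on $G\cap\C^2$, where $a_0\in\mathcal{H}(\C^*)$ extends holomorphically to $u=\infty$ by the first step. Setting $g(\tau):=a_0(1/\tau)$ thus defines an entire function with $F(z,w)=g(1/(z-w))$ on $G\cap\C^2$. Since both sides are holomorphic on the connected manifold $G$ and agree on the open dense subset $G\cap\C^2$, the identity principle yields $F(z,w)=g(1/(z-w))$ on all of $G$, completing the proof.
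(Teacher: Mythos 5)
Your proof is correct, and while it rests on the same basic decomposition as the paper's --- a Fourier expansion along the $\mathbb{Z}$--action with the invariant coordinate $z-w$ as parameter --- the mechanism for annihilating the nonzero modes is genuinely different. The paper first shows, by the translation trick $\frac{\partial^k F}{\partial w^k}(z,\infty)=\lim_{n\to\infty}\frac{\partial^k F}{\partial w^k}(z-n,w)=\frac{\partial^k F}{\partial w^k}(\infty,w)$, that every $w$--derivative of $F$ is constant along the line at infinity; inserting this into the expansion of $F(z,z-v)$ in the periodic variable $z$ forces all Taylor coefficients of the Fourier coefficients $a_n(v)$ at $v=\infty$ to vanish for $n\neq 0$, whence $a_n\equiv 0$. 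You instead evaluate each coefficient $a_k(u)$ over two different unit contours: the contour $v\in[0,1]$ yields holomorphy of $a_k$ at $u=\infty$ through the chart at $L_1$, while the shifted contour $v\in -u+[0,1]$ extracts the factor $e^{2\pi i k u}$ and yields holomorphy of the cofactor $h_k$ at $u=\infty$ through the chart at $L_2$; the clash between the entire function $\widetilde a_k$ and the essential singularity of $e^{2\pi i k/\tau}$ then kills $a_k$ for $k\neq 0$. Both routes exploit the same geometric input --- holomorphy of $F$ at both families of points at infinity of $G$ --- but yours trades the paper's derivative identities at infinity for a contour shift plus a growth estimate. The one step you should make explicit is that moving the contour to $v_0=-u$ uses not merely the $1$--periodicity of the integrand but also its entirety in $v$ for fixed $u\in\C^*$ (so that the segment may be deformed off the real axis and the difference of the two integrals reduces to cancelling vertical sides); this is indeed the case here, since $(u+v,v)\in G$ for all $v\in\C$ once $u\neq 0$.
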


  On the Danielewski surface $D$ (see (\ref{eq:Dani})) the functions described in Theorem \ref{thm:punctureddisk} are exactly all holomorphic functions $(a,b,c) \mapsto F(a,b,c)$ which depend only on $a$.
  
 \begin{remark}[Elementary Fuchsian groups that do not act freely on $\H$]
   For completeness we describe all $\hat{\Gamma}$--invariant functions in $\mathcal{H}(G)$ for the case
   that $\Gamma$ is an elementary Fuchsian group which does not act freely on $\H$. There are basically two cases:

\medskip
Case 1: $\Gamma$ is a Fuchsian group generated by an elliptic element. On $\D$, such a group is conjugated to 
\[\Gamma' = \{z\mapsto \alpha^k\cdot z \, : \, k \in \mathbb{Z}\}, \quad \alpha=\exp(2\pi i /N), \, N\geq 2.\]
 We see that the quotient $\D/\Gamma'$ is isomorphic to $\D$. (The function $[z]\in \D/\Gamma' \mapsto z^N$ is a chart.) 
 The corresponding invariant functions on $\Omega$ satisfy $F(\alpha z, \overline{\alpha} w)=F(z,w)$. 
Each of the  functions $$f_{p,q}(z,w):=\frac{z^p w^q}{(1-zw)^{\max\{p,q\}}} \, , \quad  p,q\in\N_0 \, ,$$ belongs to $\mathcal{H}(\Omega)$ and conversely, $\mathcal{H}(\Omega)$ is the closure of the span of all $f_{p,q}$,
\cite[Theorem 3.16]{KRSW}. We conclude that the space of all invariant $F\in \mathcal{H}(\Omega)$ is the closure of the span of those $f_{p,q}$ with $(p-q)/N\in\mathbb{Z}$.

\medskip

The set of all functions $\D\ni z\mapsto F(z,\overline{z})$, $F\in \mathcal{H}(\Omega)$ $\hat{\Gamma}$--invariant,  can be represented on $\D$ by using the chart $z^N$. It consists of all functions 
\[g:\D\to\C\quad \text{with}\quad g(z^N)=F(z,\overline{z}), \quad F\in \mathcal{H}(\Omega).\]

Note that on $\D/\Gamma'$ this set  contains non-differentiable functions (non-differentiable at $[0]$). For example, let $N=2$ and consider the function $g(z)=\frac{|z|}{1-|z|}$ on $\D$. Then 
$g(z^2)=f_{1,1}(z,\overline{z})$ with $f_{1,1} \in \mathcal{H}(\Omega)$.\\


Case 2: The Fuchsian group is conjugated to the group generated by $z\mapsto cz$ for some $c>1$, and $z\mapsto -1/z$ on $\H$.
This group is the direct product of the groups generated by $z\mapsto cz$ and $z\mapsto -1/z$. Thus we can take the quotient of $\H/(z\sim cz) =A_{R}=\{z\in \C \,:\, 1/R<|z|<R\}$ with respect to the involution $J(z)= 1/z$. 
The set of all functions $g \in \mathcal{A}(A_R)$ which are invariant under $z\mapsto 1/z$ can also be written as 
\[\left\{ z \mapsto g\left(\left(\tan\left(\frac{\pi}{2 \log
              R} \log |z|\right)\right)^2 \right)  \, : \, g \in \mathcal{H}(\C)  \right\}.\]
This follows immediately from Theorem \ref{thm:mainannulus}.
\end{remark}

\hide{We finally turn to Problem \ref{prob:1} (b) and investigate the quotient $G/\hat{\Gamma}$ for  a Fuchsian
group $\Gamma$.
 
 \begin{theorem} \label{thm:factor}
   Let $\Gamma$ be a Fuchsian group. Then the following are pairwise equivalent.
   \begin{itemize}
   \item[(a)] $\hat{\Gamma}$ acts freely on $G$;
   \item[(b)] $\hat{\Gamma}$ acts properly discontinuously on $G$;
     \item[(c)] $\Gamma$ is conjugate to $\{z \mapsto z+n \, : \, n \in \mathbb{Z}\}$.
     \end{itemize}
   \end{theorem}

   \begin{theorem} \label{thm:factor2}
     Let $\Gamma$ be a non--trivial Fuchsian group. Then the following are pairwise equivalent.
     \begin{itemize}
       \item[(a)] $G/ \hat{\Gamma}$ is a manifold;
       \item[(b)] $\H/\Gamma$ is conformally equivalent to the punctured disk $\D^*$; 
       \item[(c)] $\Gamma$ is conjugate to $\{z \mapsto z+n \, : \, n \in \mathbb{Z}\}$
\end{itemize}
       \end{theorem}

       \begin{theorem}[Annulus] \label{thm:factor3}
Let $\Gamma=\{z \mapsto c^n z \, : \, n \in \mathbb{Z}\}$ for some $c>1$ and
let $G':=G \setminus \{(\infty,0),(0,\infty)\}$. 
Then $G'/\hat{\Gamma}$ is a non--Hausdorff complex manifold.
\end{theorem}}

\section{M\"obius--invariant holomorphic functions on the second configuration space of the sphere -- Proof of Theorem \ref{thm:gen} and Theorem \ref{thm:annulus}} \label{sec:proofsMoebius}

For the proof of Theorem \ref{thm:gen} we recall that $\gamma \in
\Aut(\H)$ is said to be hyperbolic if it has two distinct fixed--points on
$\partial\H$, one repulsive and one attractive. If $\Gamma$ is a
non--elementary Fuchsian group, then its limit set $\Lambda(\Gamma)$ has the
following two well--known properties
\begin{itemize}
\item[(P1)] $\Lambda(\Gamma)$
is the
closure of the set of fixed points of the hyperbolic elements in $\Gamma$
(\cite[Thm.~3.4.4]{Katok});
\item[(P2)]
  $\Lambda(\Gamma)$ has no isolated points (\cite[Thm.~3.4.6]{Katok}).
  \end{itemize}
The idea of the proof of Theorem \ref{thm:gen} is as follows. In  a first
step, one proves that for  each fixed point $w_0$ of any hyperbolic element in
$\Gamma$ the holomorphic functions $F(w_0, \cdot)$ and $F(\cdot,w_0)$ are constant.
In a second step, which is based on (P1), it is then shown that $F$ is constant  on
$(\Lambda(\Gamma) \times \Lambda(\Gamma)) \cap G$. From this and  using (P2), one
finally can prove that $F$ is constant on~$G$.

 \begin{proof}[Proof of Theorem \ref{thm:gen}]
   Let $F \in \mathcal{H}(G)$ be $\hat{\Gamma}$-invariant.

   \smallskip
   
   (i) Let $\gamma$ be a hyperbolic automorphism in $\Gamma$ with fixed point $w_0
   \in \partial \H=\R\cup \{\infty\}$. We show that
   $F(\cdot,w_0) \in \H(\hat{\C} \setminus \{w_0\})$  and  $F(w_0,\cdot) \in
   \H(\hat{\C} \setminus \{w_0\})$ are constant.

   \smallskip
   Since $\gamma$ is hyperbolic, it  has a second fixed point $z_0 \in
   \partial \H \setminus \{w_0\}$. We may assume that $z_0 \in \R$, so 
$\gamma'(z_0) \not \in \partial \D$. The function
$g:=F(\cdot,w_0) \in \mathcal{H}(\hat{\C} \setminus \{w_0\})$ is
$\gamma$--invariant, since
\begin{equation} \label{eq:23}
 (g \circ \gamma)(z)=F(\gamma(z),w_0)=F(\gamma(z),\gamma(w_0))=F(z,w_0)=g(z)
 \, .
 \end{equation}
In particular, $g$ has a fixed point at $z_0$, and taking the derivative in
(\ref{eq:23}) at $z_0$ leads to $g'(z_0) \cdot \gamma'(z_0)=g'(z_0)$. Since
$\gamma'(z_0)\not=1$, this implies $g'(z_0)=0$. Taking the second derivative
in (\ref{eq:23}) at $z_0$ leads to $g''(z_0)=0$ and continuing this way, we
easily see that $g$ is constant. In a similar way, one can show that
$F(w_0,\cdot) \in \mathcal{H}(\hat{\C} \setminus \{w_0\})$ is constant.

\smallskip

(ii) Fix two distinct 
fixed points $p,q \in \partial \H$ of some hyperbolic automorphism in $\Gamma$ and let
$C:=F(p,q)$. Now, let $r,s \in \partial \H \setminus\{p,q\}$ be two distinct
fixed points of another  hyperbolic automorphism in $\Gamma$.
By (i),  $F(p,\hat{\C} \setminus\{p\})
=\{C\}$, so $F(p,s)=C$. Again by (i), we get  $F(\hat{\C} \setminus
\{s\},s)=C$, so $F(r,s)=C$. Since $\Gamma$ is non--elementary, the fixed points of all hyperbolic
elements in $\Gamma$ are dense in $\Lambda(\Gamma)$. This implies that $F$ is constant $=C$ on
$(\Lambda(\Gamma) \times \Lambda(\Gamma)) \cap G$ as follows. Let $(p,q) \in \left(\Lambda(\Gamma) \times \Lambda(\Gamma) \right) \cap \Omega$. In particular, $p\not=q$. Then there are sequences $(p_n)$, $(q_n) \subseteq \partial \H$, $(T_n) \subseteq \Gamma$ such that each $T_n$ is hyperbolic  and $T_n(p_n)=p_n \to p$ as well as $T_n(q_n)=q_n\to q$ as $n \to \infty$. Hence
$q_n \not=p_n$ for $n$ sufficiently large, so $F(p_n,\cdot) \equiv C$ on $\hat{\C} \setminus \{p_n\}$ and thus $F(p,q)=\lim_{n \to \infty} F(p_n,q_n)=C$.
\smallskip

(iii) Since $\Lambda(\Gamma)$ contains no isolated points, it is clear that any
 $h \in \mathcal{H}(G)$ which is constant on $(\Lambda(\Gamma) \times
\Lambda(\Gamma)) \cap G$ has the property that ${\partial h/\partial z}={\partial
  h/\partial w}=0$ on $(\Lambda(\Gamma) \times
\Lambda(\Gamma)) \cap G$.
In particular,  this implies that for any $(p,q) \in (\Lambda(\Gamma) \times
\Lambda(\Gamma)) \cap G$ all partial derivatives of $F$ 
at the point $(p,q)$ vanish. Hence $F$ is constant.
\end{proof}

\begin{proof}[Proof of Theorem \ref{thm:annulus}]
Since $$\hat{\Gamma}=\{ (z,w) \mapsto (c^n z,c^n w) \, : \, n \in \mathbb{Z}\}\,
, $$
it is clear that for every entire function $g : \C \to \C$ the holomorphic function
$$ F : G \to \C \, , \qquad F(z,w):=g \left(\frac{w}{z-w} \right) \, , $$
is $\hat{\Gamma}$--invariant. In order to prove the only--if part of Theorem
\ref{thm:annulus}, let $F \in \mathcal{H}(G)$ be $\hat{\Gamma}$--invariant.
We define
$$ H \in \mathcal{H}\left(\C^2 \setminus \left(\{ (1,v) \, : \, v \in \C \} \cup \{(u,0) \, : \, u
  \in \C\} \right)\right) \, , \qquad H(u,v):=F(v/u,v) \, .$$
For fixed $u \in \C \setminus \{1\}$, we have
$$H(u,cv)=F(cv/u,cv)=F(v/u,v)=H(u,v) \, , \qquad v \in \C \setminus \{0\} \,
.$$
Comparing the coefficients in the Laurent expansions about the point $v=0$ on
both sides, we see that
 $H(u,\cdot)$ is constant $=H(u,1)$ on $\C \setminus \{0\}$. 
This implies
that
$$ F(z,w)=H(w/z,w)=H(w/z,1) \qquad \text{ for all }  (z,w) \in G \text{ s.t. } z\not=0 \, .$$
We now set
$$  g(s):=H\left(\frac{s}{s-1},1\right) \,  \qquad s \in  \C
\setminus\{1\} \,.$$
Then $g \in \mathcal{H}(\C\setminus \{1\})$ has a holomorphic extension to
$\C$, since
$$ g(s)=
F\left(1-\frac{1}{s},1\right) \, , \quad s \in \C \setminus \{1\}\,, $$
and $F$ is holomorphic on $G$. This shows that
$$ F(z,w)=g\left( \frac{1}{1-z/w} \right)$$
for all $(z,w) \in G$.
  \end{proof}

 \begin{proof}[Proof of Theorem \ref{thm:punctureddisk} (a)]
Since $$\hat{\Gamma}=\{ (z,w) \mapsto (z+n,w+n) \, : \, n \in \mathbb{Z}\}\,
, $$
it is clear that for every entire function $g : \C \to \C$ the function
$$ F : G \to \C \, , \qquad F(z,w):=g \left(\frac{1}{z-w} \right) \, , $$
is $\hat{\Gamma}$--invariant. In order to prove the only--if part of Theorem
\ref{thm:punctureddisk}, let $F \in \mathcal{H}(G)$ be a
$\hat{\Gamma}$--invariant function. We proceed in two steps.

\medskip

(i) We first note  that for any $k=0,1,2\ldots$ the
entire function
$$ \frac{\partial^k F}{\partial
                  w^k}(\cdot,\infty)\, $$
 is constant. In fact,
$$ \frac{\partial^k F}{\partial
  w^k} \in \mathcal{H}(\Omega)$$
is $\hat{\Gamma}$--invariant, so that for fixed $z,w \in \C$, we have
\begin{eqnarray*}
\frac{\partial^k F}{\partial
                  w^k}(z,\infty) &=& \lim \limits_{n \to \infty} \frac{\partial^k F}{\partial
                  w^k}(z,w+n)=\lim \limits_{n \to
                \infty} \frac{\partial^k F}{\partial
                  w^k}(z-n+n,w+n)\\[2mm] &=&\lim \limits_{n \to \infty}
                \frac{\partial^k F}{\partial
                  w^k}(z-n,w)=\frac{\partial^k F}{\partial
                  w^k}(\infty, w) \, .
\end{eqnarray*}
This proves (i).

\smallskip
(ii) Define
$$ H \in \mathcal{H}(\C \times \hat{\C} \setminus \{0\})\, , \qquad
H(u,v):=F(u,u-v) \, .$$
Since $F$ is $\hat{\Gamma}$--invariant, we see that for every fixed $v \in
\hat{\C} \setminus \{0\}$ the function $u \mapsto H(u,v)$ is a $1$--periodic
entire function and therefore can be written as
$$ H(u,v)=\sum \limits_{n=-\infty}^{\infty} a_n(v) e^{2 \pi i n u} $$
with ``Fourier coefficients'' $a_n \in \mathcal{H}(\hat{\C} \setminus \{0\})$
for $n =0,\pm 1, \pm 2, \ldots$.
Now (i) implies that for each $k=0,\pm 1,\pm 2, \ldots$
the $1$--periodic entire function
$$ \frac{\partial^k H}{\partial v^k}(\cdot,\infty)$$
is constant and hence
$$ \frac{d^k a_n}{d v^k}(\infty)=0 \quad \text{ for all } n=\pm 1, \pm 2,
\ldots \, .$$
For any $n=\pm 1, \pm 2,
\ldots$ the holomorphic function  $a_n : \hat{\C} \setminus \{0\} \to \C$  therefore
vanishes identically  and we conclude
$$ H(u,v)=a_0(v) \, , \qquad (u,v) \in \C \times \hat{\C} \setminus \{0\}
\,. $$
This shows that
$$F(z,w)=H(z,z-w)=a_0(z-w) \text{ for all } (z,w) \in G \, $$
with a function $a_0 \in \mathcal{H}(\hat{\C} \setminus \{0\})$. The proof of
Theorem \ref{thm:punctureddisk} is complete.
\end{proof}

\hide{\begin{proof}[Proof of Theorem \ref{thm:factor}]
(a) $\Longrightarrow$ (c):
  Let $\hat{\Gamma}$ act freely on $\Omega$. Then every  element in $\Gamma
  \setminus\{\text{id}\}$ has only one fixed point, which then lies on
  $\partial \H$. In particular, $\Gamma$ is an elementary Fuchsian group by
  \cite[Thm.~2.4.4]{Katok}.
  By \cite[Thm. 2.4.3 (proof)]{Katok}, $\Gamma$ is cyclic, so we may assume that
  $\Gamma = \{z \mapsto z+n \, :
    \, n \in \mathbb{Z}\}$.

    \medskip

    (b) $\Longrightarrow$ (c):
    Let $\hat{\Gamma}$ act discontinuously on $\Omega$. It suffices to show
    that $\Gamma$ contains only parabolic elements.
    Otherwise, $\Gamma$ contains a non--parabolic element $\gamma$. By
    conjugation, we may assume that $\gamma(z)=cz$ with $0<c<1$. Consider the
    compact set $K=\{(1/2,z) \, : z \in \hat{\C} \setminus \D\} \subseteq \Omega$.
    Then $\hat{\gamma}^n(K)=\{(c^n/2, \hat{\C} \setminus \D_{c^n})\}$ has the
    limit point $(0,1) \in \Omega$, say. Hence $\hat{\Gamma}$ does not act
    properly discontinuous on $\Omega$, a contradiction.
      \end{proof}}

\hide{
\begin{proof}[Proof of Theorem \ref{thm:factor2}]
{\color{red}{ToDo}}
\end{proof}  
\begin{proof}[Proof of Theorem \ref{thm:factor3}]
{\color{red}{ToDo}}
\end{proof}  }

\section{The convergent Wick--star product on doubly connected Riemann surfaces} \label{sec:Doublyconnected}

In this section we prove Theorem \ref{thm:mainannulus} and  Corollary \ref{cor:mainannulus} as well as Theorem \ref{thm:mainpunctureddisk}  and Theorem \ref{cor:mainpunctureddisk}. The main tools will be (i) an explicit formula for the star product $\star_{\hbar,\D}$ on the unit disk $\D$ and (ii) an asymptotic expansion for it as $\hbar \to 0$. Both results have recently been established in \cite{HeinsMouchaRoth1} and rely essentially on formula (5.24) in \cite{SchmittSchoetz2022}. The explicit formula for $\star_{\D}$ is based
on invariant derivatives of Peschl--Minda type, which are defined as follows. For $z \in \D$ denote
$$ T_z(u):=\frac{z+u}{1+\overline{z} u} \in \Aut(\D)\, ,$$
and by
$$\partial:=\frac{1}{2} \left( \frac{\partial}{\partial x}-i \frac{\partial}{\partial y}\right)$$  the standard (Wirtinger) differential operator.
Then for every $C^{\infty}$--function $f : \D \to \C$ the Peschl--Minda derivatives of $f$  of order $n$ at the point $z \in D$ are defined by 
$$ D^nf(z):=\partial^n (f \circ T_z)(0) \, , \qquad \overline{D}^nf(z):=\overline{D^n(\overline{f})(z)}\, ,$$
see e.g.~\cite{HeinsMouchaRoth1,KS07diff}. Corollary 5.1 in \cite{HeinsMouchaRoth1} provides the following explicit formula for $D^nf$ in terms of the Wirtinger derivative $\partial$:
\begin{equation} \label{eq:PM2}
D^{n+1} f(z)= \left( 1-|z|^2 \right)
                \partial^{n+1}
                \left[ \left(1-|z|^2 \right)^n f(z) \right]
                \end{equation}
Unlike $\partial^{n+1}=\partial^n \partial$ one has $D^{n+1} \not=D^n D^1$ in general. The basic result we need is now

\begin{equation} \label{eq:*D}
(f \star_{\D,\hbar} \tilde{f})(z)=\sum \limits_{n=0}^{\infty} \frac{c_n(\hbar)}{n!} \,  D^n\tilde{f}(z) \cdot \overline{D}^nf(z) \, 
\end{equation}
 for every $f,\tilde{f} \in \A$ and every $\hbar \in \mathscr{D}$, and with $c_n(\hbar)$ defined as in (\ref{eq:coeff}).
 In fact, for each fixed $z \in \D$ the  series (\ref{eq:*D}) converges  absolutely and locally uniformly w.r.t.~$\hbar \in \mathscr{D}$, so $\hbar \mapsto (f \star_{\D,\hbar} \tilde{f})(z)$ is holomorphic on $\mathscr{D}$. Moreover, for fixed $\hbar \in \mathscr{D}$, the series (\ref{eq:PM2}) converges in the topology of $\mathcal{A}(\D)$. See \cite[Lemma 6.5]{HeinsMouchaRoth1}.

\begin{proof}[Proof of Theorem \ref{thm:mainannulus}] The proof is divided into several steps.
  
(i)   
We first prove that the linear mapping $T_R : \mathcal{H}(\C) \to \mathcal{A}(A_R)$, $T_R(g)=g \circ f_R$, is bijective and establish (\ref{eq:A(A_R)}). For this purpose we work on the upper half plane $\H$. We note that 
      $$\pi : \H \to A_R \, , \qquad \pi(z)=\exp \left( \frac{ 2i\log R}{\pi} \log
        \left( \frac{z}{i} \right) \right)\, , $$
      is a universal covering of $A_R$ and $\Gamma=\{z \mapsto c^n z \, :
  \, n \in \mathbb{Z}\}$ is the   associated group of deck transformations,
  where
  $$ \log c=\frac{\pi^2}{\log R} \, ,$$
  and $\log$ denotes the principal branch of the logarithm.
Let  $f : A_R \to \C$ be a function. Then $f$ belongs to $\mathcal{A}(A_R)$ if and only
if  the $\Gamma$--invariant function $f \circ \pi$ belongs to $\mathcal{A}(\H)$. By
Lemma \ref{lem:2_1} this is equivalent to  $(f \circ
\pi)(z)=F(z,\overline{z})$ for all $z \in \H$ for some uniquely determined $\hat{\Gamma}$--invariant function $F \in
\mathcal{H}(G)$. Theorem \ref{thm:annulus} therefore shows that $f \in
\mathcal{A}(A_R)$ if and only if  
$$ (f \circ \pi)(z)=\tilde{g} \left( \frac{\overline{z}}{z-\overline{z}}
  \right) \, , \qquad z \in \H \, , $$
  for some  uniquely determined function $\tilde{g} \in \mathcal{H}(\C)$. In fact, the proof of Theorem \ref{thm:annulus} reveals that  $\tilde{g}(s)=F(1-1/s,1)$ for all $s \in \C \setminus \{1\}$.
Also note that
$$ \frac{\overline{z}}{z-\overline{z}}=-\frac{i}{2} \tan \left( \frac{\pi}{2
      \log R} \log |\pi(z)| \right)-\frac{1}{2}=\frac{1}{2} f_R(\pi(z))-\frac{1}{2} \, , \qquad z \in \H \, , $$
with $f_R$ as defined in (\ref{eq:f_RDef}). Hence, if $f \in \mathcal{A}(A_R)$, then 
 $ f=g  \circ f_R$ for the entire function $g(z):=\tilde{g}(z/2-1/2)$, and  conversely, $f:=g \circ f_R$ belongs to $\mathcal{A}(A_R)$ for every $g \in \mathcal{H}(\C)$. This proves (\ref{eq:A(A_R)}), and  also that
 the  mapping $T_R : \mathcal{H}(\C) \to \mathcal{A}(A_R)$, $T_R(g):=g \circ f_R$, is a linear bijection.

(ii) As a next step, we prove that  $T_R^{-1} : \mathcal{A}(A_R) \to \mathcal{H}(\C)$ is continuous. Suppose $(f_n) \subseteq \mathcal{A}(A_R)$ converges in $\mathcal{A}(A_R)$ to $f \in \mathcal{A}(A_R)$. Then  $(f_n\circ \pi)(z)=F_n(z,\overline{z})$ and $(f \circ \pi)(z)=F(z,\overline{z})$ for uniquely determined $\hat{\Gamma}$--invariant functions $F_n,F \in \mathcal{H}(G)$ and $f_n \to f$ in $\mathcal{A}(A_R)$ is the same  as $F_n \to F$ in $\mathcal{H}(G)$ by definition. As shown in (i), there are entire functions $\tilde{g}_n$, $\tilde{g}$ such that $\tilde{g}_n(s)=F_n(1-1/s,1)$ and $ \tilde{g}(s)=F(1-1/s,1)$ for all $s \in \C \setminus \{1\}$. It follows that $\tilde{g}_n \to \tilde{g}$  locally uniformly on $\C \setminus \{1\}$, which implies locally uniform convergence on $\C$ by the maximum principle. Hence $g_n(z):=\tilde{g}_n(z/2-1/2)$ and $g(z):=\tilde{g}(z/2-1/2)$ are entire functions such that $g_n \to g$ in $\mathcal{H}(\C)$ and $f_n=g_n \circ f_R=T_R(g_n)$ as well as $f=g \circ f_R=T_R(g)$. This means that $T_R^{-1}(f_n)=g_n \to g=T_R^{-1}(f)$ in $\mathcal{H}(\C)$, so $T_R^{-1}: \mathcal{A}(A_R) \to \mathcal{H}(\C)$ is continuous, as required. By the bounded inverse theorem for Fr\'echet spaces, $T_R : \mathcal{H}(\C) \to \mathcal{A}(A_R)$ is continuous as well.

  \medskip
  
(iii)   In order to prove the explicit formula  (\ref{eq:FormulaStarAnnulus}) for the star product $\star_{A_R,\hbar}$ for the annulus $A_R$ we work on the unit disk $\D$ and note that
  $$ \pi_R : \D \to A_R \, , \qquad \pi_R(z):=\exp \left( \frac{2i}{\pi} \log \frac{1+z}{1-z} \right)$$
  is a universal covering of $A_R$. Consider the auxiliary function
  $$ p : \D \to \C \, , \qquad p(z):=\frac{z-\overline{z}}{1-|z|^2}\, , $$ and recall the  definition of the function $f_R : A_R \to \C$ in (\ref{eq:f_RDef}). Then
  $$ f_R \circ \pi_R =p \, .$$
  A straightforward proof by induction and making use of (\ref{eq:PM2}) reveals that
  \begin{eqnarray*}
    D^n(g \circ p)(z)& =&\left( \frac{1-\overline{z}^2}{1-|z|^2}\right)^n \, \left( g^{(n)} \circ p \right)(z) \\
    \overline{D}^n(g \circ p)(z) & =&(-1)^n \left( \frac{1-z^2}{1-|z|^2}\right)^n \, \left( g^{(n)} \circ p \right)(z)\, ,
    \end{eqnarray*}
    and hence
    $$ \overline{D}^n (g \circ p) D^n(\tilde{g} \circ p)=\left( p^2-1 \right)^n \left( g^{(n)} \circ p \right) \left( \tilde{g}^{(n)} \circ p \right) $$
    for all $g, \tilde{g} \in \mathcal{H}(\C)$. Inserting this formula with $f=g \circ f_R$ and $\tilde{f}=\tilde{g} \circ f_R$ into (\ref{eq:*D}) establishes (\ref{eq:FormulaStarAnnulus}). That for fixed $\hbar \in \mathscr{D}$, the series (\ref{eq:FormulaStarAnnulus}) converges in the topology of $\mathcal{A}(A_R)$ is a direct consequence of the fact that the series (\ref{eq:*D}) converges in the topology of $\mathcal{A}(\D)$, see \cite[Lemma 6.5]{HeinsMouchaRoth1}. Alternatively, it is not difficult to show directly that the series
    $$ \sum \limits_{n=0}^{\infty}  \frac{c_n(\hbar)}{n!} \left(w^2-1\right)^n g^{(n)}(w) \tilde{g}^{(n)}(w)$$
converges locally uniformly w.r.t.~$w \in \C$ for every fixed $\hbar \in \mathscr{D}$ by making appeal to the Cauchy estimates for the entire functions $g$ and $\tilde{g}$. 
  \end{proof}

\begin{proof}[Proof of Corollary \ref{cor:mainannulus}]
  By Theorem \ref{thm:mainannulus}, the mapping $\Psi_{R',R}=T_R \circ T_{R'}^{-1} : \mathcal{A}(A_{R'}) \to \mathcal{A}(A_{R})$ is a continuous linear bijection. 
  It remains to prove that $\Psi_{R',R}$ is compatible with the algebra structures of $(\mathcal{A}(A_R),\star_{A_R})$ and $(\mathcal{A}(A_{R'}),\star_{A_{R'}})$.  First note that the definition of $\Psi_{R',R}$ implies
\begin{equation} \label{eq:Psi}
  \Psi_{R',R} (h \circ f_{R'})=h \circ f_R \qquad \text{ for all } h \in \mathcal{H}(\C) \, .
  \end{equation}
    Let $f, \tilde{f} \in \mathcal{A}(A_{R'})$ and $g, \tilde{g} \in \mathcal{H}(\C)$ such that $f=g \circ f_{R'}=T_{R'}(g)$ and $\tilde{f}=\tilde{g} \circ f_{R'}=T_{R'}(\tilde{g})$.
    Applying (\ref{eq:Psi}) for the entire function $h(w):=(w^2-1)^n g^{(n)}(w) \tilde{g}^{(n)}(w)$ shows that
    \begin{equation} \label{eq:Psi2}
    \Psi_{R',R}\left[ \left(f_{R'}^2-1 \right)^n \left( g^{(n)} \circ f_{R'}\right) \left(  \tilde{g}^{(n)} \circ f_{R'}\right)\right] =\left( f_{R}^2-1 \right)^n \left( g^{(n)} \circ f_R \right) \left( \tilde{g}^{(n)} \circ f_R \right) \, .
    \end{equation}

    Since the series (\ref{eq:FormulaStarAnnulus}) converges for fixed $\hbar \in \mathscr{D}$ in the topology of $\mathcal{A}(A_R)$ and $\Psi_{R',R} : \mathcal{A}(A_{R'}) \to \mathcal{A}(A_R)$ is continuous, we deduce 
   \begin{eqnarray*}
      \Psi_{R',R}(f) \star_{A_{R}} \Psi_{R,',R}(\tilde{f}) &=& \Psi_{R',R}(g \circ f_{R'}) \star_{A_{R}} \Psi_{R',R}( \tilde{g} \circ f_{R'}) \\
                                                           &\overset{(\ref{eq:Psi2})}{=}& (g \circ f_R) \star_{A_R} (\tilde{g} \circ f_R)\\
                                                           &\overset{(\ref{eq:FormulaStarAnnulus})}{=}& \sum \limits_{n=0}^{\infty} c_n(\hbar) \left( f_{R}^2-1 \right)^n \left( g^{(n)} \circ f_R \right) \left( \tilde{g}^{(n)} \circ f_R \right)\\
      &\overset{(\ref{eq:Psi2})}{=}&  \sum \limits_{n=0}^{\infty}   c_n(\hbar) \Psi_{R',R}\left[\left( f_{R'}^2-1 \right)^n \left( g^{(n)} \circ f_{R'} \right) \left( \tilde{g}^{(n)} \circ f_{R'} \right)\right] \\
                                                           &=& \Psi_{R',R} \left[ \sum \limits_{n=0}^{\infty} c_n(\hbar) \left( f_{R'}^2-1 \right)^n \left( g^{(n)} \circ f_{R'} \right) \left( \tilde{g}^{(n)} \circ f_{R'} \right) \right]\\ &\overset{(\ref{eq:FormulaStarAnnulus})}{=}& 
      \Psi_{R',R}\left( f \star_{A_R'} \tilde{f} \right) \, .
    \end{eqnarray*}
  \end{proof}
    
    \begin{proof}[Proof of Theorem \ref{thm:mainpunctureddisk}]
The proof is very similar to the proof of Theorem \ref{thm:mainannulus}, so we only indicate the main steps. 
We first prove (\ref{eq:A(D^*)}). To this end,  we note that 
      $\pi : \H \to \D^*$, $\pi(z)=e^{2\pi
    iz}$, is a universal covering of $\D^*$ and $\Gamma=\{z \mapsto z+n \, :
  \, n \in \mathbb{Z}\}$ is the associated group of deck transformations. 
Hence a function  $f : \D^* \to \C$ belongs to $\mathcal{A}(\D^*)$ if and only
if  the $\Gamma$--invariant function $f \circ \pi$ belongs to $\mathcal{A}(\H)$. By
Lemma \ref{lem:2_1} this is equivalent to  $(f \circ
\pi)(z)=F(z,\overline{z})$ for some $\hat{\Gamma}$--invariant function $F \in
\mathcal{H}(G)$. Theorem \ref{thm:punctureddisk} therefore shows that $f \in
\mathcal{A}(\D^*)$ if and only if  
$$ f(e^{2 \pi i z})=(f \circ \pi)(z)=\tilde{g} \left( \frac{1}{z-\overline{z}}
  \right) \, , \qquad z \in \H \, , $$
  for some entire function $\tilde{g} : \C \to \C$ or
  $$ f(w)=g \left( \frac{1}{\log |w|} \right)=g(f_0(w)) \, , \qquad w \in \D^* \, ,$$
  for the entire function $g(z):=\tilde{g}(-\pi z/i)$. Conversely, $f:= g \circ f_0$ belongs to $\mathcal{A}(\D^*)$ for every $g \in \mathcal{H}(\C)$, proving (\ref{eq:A(D^*)}).

  \medskip

  Finally, we prove the explicit formula (\ref{eq:FormulaStarPuncturedDisk}) for the punctured disk $\D^*$, and for this purpose we work on the unit disk $\D$ and employ the universal covering map $\pi_0 : \D \to \D^*$ given by
  $$ \pi_0(z):=\exp \left(-\frac{1+z}{1-z} \right) \, .$$
  Consider the auxiliary function
  $$ q : \D \to \C \, , \qquad q(z):=\frac{|1-z|^2}{1-|z|^2} $$
  and the function $f_0 : \D^* \to \C$ defined in (\ref{eq:f_0Def}). Then
  $$ \pi_0 \circ f_0=q \, , $$
  and it easily follows inductively from (\ref{eq:PM2}) that
    \begin{eqnarray*}
    D^n(g \circ q)(z)& =&(-1)^n \frac{(1-\overline{z})^{2n}}{(1-|z|^2)^n} \, \left( g^{(n)} \circ q \right)(z) \\
    \overline{D}^n(g \circ q)(z) & =&(-1)^n  \frac{(1-z)^{2n}}{(1-|z|^2)^n} \, \left( g^{(n)} \circ q \right)(z)\, 
    \end{eqnarray*}
    and hence
\begin{equation} \label{eq:Bi}
  \overline{D}^n (g \circ q) D^n(\tilde{g} \circ q)=q^{2n}  \left( g^{(n)} \circ q \right) \left( \tilde{g}^{(n)} \circ q \right)
  \end{equation}
     for all $g, \tilde{g} \in \mathcal{H}(\C)$. Plugging this expression into (\ref{eq:*D}) results in
     (\ref{eq:FormulaStarPuncturedDisk})
    \end{proof}

    \begin{proof}[Proof of Theorem \ref{cor:mainpunctureddisk}]
The proof is by contradiction. 
      Assume there is a strong Fr\'echet algebra isomorphism $\Psi : (\mathcal{A}(A_R),\star_{A_R}) \to (\mathcal{A}(\D^*),\star_{\D^*})$. 

      (i) Consider $f_R \in \mathcal{A}(A_R)$ as defined by (\ref{eq:f_RDef}). Then (\ref{eq:FormulaStarAnnulus}) for $g(z)=\tilde{g}(z)=z$
implies
$$ f_R \star_{A_R} f_R=f_R^2+\left( f_R^2-1 \right) \hbar \, $$
and hence
$$ \Psi(f_R \star_{A_R} f_R)=\Psi\left(f_R^2+\hbar\left(f_R^2-1\right) \right)=\Psi(f_R^2)+ \left(\Psi(f_R^2)-1\right) \hbar\, .$$
Here, we have used that $\Psi$ is a linear map and $\Psi(1)=1$, with $1$ denoting the function with constant value $1$.

\medskip
(ii) By Theorem \ref{thm:mainpunctureddisk}, there is a unique $g \in \mathcal{H}(\C)$ such that
$$ \pi_0 \circ \Psi(f_R) =g \circ q \, .$$
Here, $\pi_0$ and $q$ have the same meaning as in the proof of Theorem \ref{thm:mainpunctureddisk}.
It follows from the definition of the star product $\star_{\hbar,\D^*}$ that
$$\left(   \Psi(f_R) \star_{\hbar,\D^*} \Psi(f_R) \right) \circ \pi_0=( g \circ q) \star_{\hbar,\D} (g \circ q) \, .$$
We now make use of the asymptotic expansion for $\star_{\hbar,\D}$ as $\hbar \to 0$ established in \cite[Theorem 6.9]{HeinsMouchaRoth1}, which asserts
\begin{eqnarray*} 
  ( g \circ q) \star_{\hbar,\D} (g \circ q)&=& (g \circ q)^2+\hbar \overline{D}^1(g \circ q) D^1(g \circ q)+\frac{\hbar^2}{2} \overline{D}^2(g \circ q) D^2(g \circ q)+O(\hbar^3)\\
&\overset{(\ref{eq:Bi})}{=}& \Psi(f_R)^2 \circ \pi_0+\hbar q^2  (g' \circ q)^2+\frac{\hbar^2}{2} q^4 (g'' \circ q)^2+O(\hbar^3)
  \, .
                                               \end{eqnarray*}
(iii) Combing (i) and (ii), we see that under the assumption that $\Psi(f_R \star_{\hbar,A_R} f_R)=  \Psi(f_R) \star_{\hbar,\D^*} \Psi(f_R) $ for all $\hbar \in \mathscr{D}$,
$$ \Psi(f_R^2) \circ \pi_0+ \left(\Psi(f_R^2)\circ \pi_0-1\right) \hbar= \Psi(f_R)^2 \circ \pi_0+\hbar q^2  (g' \circ q)^2+\frac{\hbar^2}{2} q^4 (g'' \circ q)^2+O(\hbar^3) \, .
$$
Comparing equal powers of $\hbar$,
we get
\begin{itemize}
\item[(I)] $\hbar^2$: $\frac{1}{2} q^4 (g'' \circ q)^2=0$, so $g''=0$, and hence $g(z)=\alpha z+\beta$ for some $\alpha,\beta \in \C$.
\item[(II)] $\hbar^1$: $\Psi(f_R^2)\circ \pi_0-1=q^2 \alpha^2$.
\item[(III)] $\hbar^0$: $\Psi(f_R^2) \circ \pi_0=\Psi(f_R)^2 \circ \pi_0$ and hence $\Psi(f_R^2) \circ \pi_0=(g \circ q)^2=(\alpha^2 q^2+2 \alpha \beta q+\beta^2)$. 
\end{itemize}
Combining (II) and (III) gives $\alpha=0$ and $\beta=\pm 1$. Therefore, $\Psi(f_R)=\pm 1=\pm \Psi(1)=\Psi(\pm 1)$, and thus $f_R=\pm 1$, a contradiction.
\end{proof}

\section{Proof of Theorem \ref{thm:main}, Corollary \ref{cor:1} and Corollary \ref{cor:2}} \label{sec:proofs}

\begin{proof}[Proof of Theorem \ref{thm:main}]
The ``only if'' parts of (a), (b) and (c) follow from Proposition \ref{prop:confinv}.
In order to prove the ``if part'' of (a) let $\mathcal{A}(X) \si \mathcal{A}(\D)$. In particular, $\mathcal{A}(X)$ contains non--constant functions and $\star_X$ is noncommutative. Therefore Corollary \ref{cor:intro1} implies that  the covering group of $X$ is elementary, and Theorem \ref{thm:mainannulus} and Theorem \ref{thm:mainpunctureddisk} imply that $X$ is not  conformally equivalent to a proper annulus or the punctured disk. Hence $X$ is simply connected.
Now assume $\mathcal{A}(X) \si \mathcal{A}(A_R)$ for some $R>1$. Then, by Corollary \ref{cor:mainannulus}, $\mathcal{A}(X) \si \mathcal{A}(A_{R'})$ for every $R'>1$. Moreover, $(\mathcal{A}(X),\star_X)$ is not strongly isomorphic to $(\mathcal{A}(\D^*),\star_{\D^*})$ by Theorem \ref{cor:mainpunctureddisk} and hence $X$ is not conformally equivalent to $\D^*$ by Proposition \ref{prop:confinv}. $X$ cannot be conformally equivalent to $\D$ since otherwise $(\mathcal{A}(X),\star_X)$ would be noncommutative, contradicting Theorem \ref{thm:mainannulus}.  We conclude that $X$ has to be conformally equivalent to some proper annulus $A_\rho$, $\rho>1$. This proves the ``if part'' of (b). The ``if part'' of (c) follows in a similar way. To prove (d) note that if $X$ is neither conformally equivalent to $\D$ nor to some annulus or punctured disk, then the covering group of $X$ is non--elementary, so $\mathcal{A}(X)$ consist only of constant functions by Corollary \ref{cor:intro1}.
\end{proof}

\begin{proof}[Proof of Corollary \ref{cor:2}]
  Note that $\mathcal{A}(A_R)$ and $\mathcal{A}(\D^*)$ do not separate points since they consists only of radially symmetric functions by Theorem \ref{thm:mainannulus} and Theorem \ref{thm:mainpunctureddisk}. On the other hand,
  $\mathcal{A}(\D)$ separates points, since $\mathcal{H}(\Omega)$ separates points as $\Omega$ is a Stein manifold, see Theorem \ref{thm:G}. This proves that (a) and (b) are equivalent. The equivalence of (a) and (c) follows directly from Theorem \ref{thm:main}, Theorem \ref{thm:mainannulus} and Theorem \ref{thm:mainpunctureddisk}.
  \end{proof}

\hide{\subsection{Proofs of Theorem \ref{thm:main} and Corollary \ref{cor:1}}

\subsubsection{Annulus}

\begin{proof}[Proof of Theorem \ref{thm:main}]
Let $X$ be a hyperbolic Riemann surface.

\begin{itemize}
\item[(b)] If $X$ is conformally equivalent to an annulus $A_R$ for some $R>1$, then
  $\mathcal{A}(X) \si \mathcal{A}(A_R)$ by Proposition \ref{prop:confinv}.
 In order to  find $\mathcal{A}(A_R)$ we note that 
      $\pi : \H \to A_R$, $$\pi(z)=\exp \left( \frac{ 2i\log R}{\pi} \log
        \left( \frac{z}{i} \right) \right)\, , $$
      is a universal covering of $A_R$ and $\Gamma=\{z \mapsto c^n z \, :
  \, n \in \mathbb{Z}\}$ is the associated group of deck transformations,
  where
  $$ \log c=\frac{\pi^2}{\log R} \, .$$
Hence a function  $f : A_R \to \C$ belongs to $\mathcal{A}(A_R)$ if and only
if  the $\Gamma$--invariant function $f \circ \pi$ belongs to $\mathcal{A}(\H)$. By
Lemma \ref{lem:2_1} this is equivalent to  $(f \circ
\pi)(z)=F(z,\overline{z})$ for some $\hat{\Gamma}$--invariant function $F \in
\mathcal{H}(G)$. Theorem \ref{thm:annulus} therefore shows that $f \in
\mathcal{A}(A_R)$ if and only if  
$$ (f \circ \pi)(z)=\tilde{g} \left( \frac{\overline{z}}{z-\overline{z}}
  \right) \, , \qquad z \in \H \, , $$
  for some entire function $\tilde{g} : \C \to \C$. Since
$$ \frac{\overline{z}}{z-\overline{z}}=-\frac{i}{2} \tan \left( \frac{\pi}{2
      \log R} \log |\pi(z)| \right)-\frac{1}{2} \, ,$$
we see that
$$ f(w)=g \left ( \tan \left(\frac{\pi}{2
      \log R} \log |w| \right)\right)$$
for the entire function $g(z):=\tilde{g}(-i z/2-1/2)$.

  \item[(c)] If $X$ is conformally equivalent to the punctured disk $\D^*$,
    then $\mathcal{A}(X) \si \mathcal{A}(\D^*)$ by Proposition
    \ref{prop:confinv}. In order to  find $\mathcal{A}(\D^*)$ we note that 
      $\pi : \H \to \D^*$, $\pi(z)=e^{2\pi
    iz}$, is a universal covering of $\D^*$ and $\Gamma=\{z \mapsto z+n \, :
  \, n \in \mathbb{Z}\}$ is the associated group of deck transformations. 
Hence a function  $f : \D^* \to \C$ belongs to $\mathcal{A}(\D^*)$ if and only
if  the $\Gamma$--invariant function $f \circ \pi$ belongs to $\mathcal{A}(\H)$. By
Lemma \ref{lem:2_1} this is equivalent to  $(f \circ
\pi)(z)=F(z,\overline{z})$ for some $\hat{\Gamma}$--invariant function $F \in
\mathcal{H}(G)$. Theorem \ref{thm:punctureddisk} therefore shows that $f \in
\mathcal{A}(\D^*)$ if and only if  
$$ f(e^{2 \pi i z})=(f \circ \pi)(z)=\tilde{g} \left( \frac{1}{z-\overline{z}}
  \right) \, , \qquad z \in \H \, , $$
  for some entire function $\tilde{g} : \C \to \C$ or
  $$ f(w)=g \left( \frac{1}{\log |w|} \right) \, , \qquad w \in \D^* \, ,$$
  for the entire function $g(z):=\tilde{g}(-\pi z/i)$.
\end{itemize}
\end{proof}

\begin{proof}[Proof of Corollary \ref{cor:1}]
(a) If $X$ is a simply connected hyperbolic Riemann surface, then $X$ is
conformally equivalent  to $\D$ by the Uniformization Theorem for Riemann
surfaces. Hence Theorem \ref{thm:main} (a) implies $\mathcal{A}(X) \si \A$.
Conversely, if $\mathcal{A}(X) \si \A$, then $X$ has to be conformally equivalent
to $\D$ by part (b) and Theorem \ref{thm:main}, so $X$ is simply connected.

\medskip

 \hide{We first show that  $\mathcal{A}(A_R) \si \mathcal{A}(A_{R'})$ for all $R,R'>1$.
Consider the universal covering  $\pi_{R} : \H \to A_R$, $$\pi_R(z)=\exp \left( \frac{ 2i\log R}{\pi} \log
  \left( \frac{z}{i} \right) \right)\,  . $$
Let $f \in \mathcal{A}(A_R)$. By (the proof of) Theorem \ref{thm:main} (b),  there is a uniquely detemined $g \in \mathcal{H}(\C)$ such that
$$ (f \circ \pi_{R})(z)=g \left( \frac{\overline{z}}{z-\overline{z}} \right) \, , \qquad z\in \H \, .$$
Again by (the proof of) Theorem \ref{thm:main} (b), we see that 
there is a function $\Phi(f) \in \mathcal{A}(A_{R'})$ such that
$$ (\Phi(f) \circ \pi_{R'})(z)=g \left( \frac{\overline{z}}{z-\overline{z}} \right) \, , \qquad z\in \H \, .$$
Hence $\Phi(f) \circ \pi_{R'}=f \circ \pi_R$, and it follows easily that 
this defines an algebra isomorphism $\Phi : (\mathcal{A}(A_R),\star_{A_R}) \to (\mathcal{A}(A_{R'}),\star_{A_{R'}})$.}

\medskip

(b) We now prove that $(\mathcal{A}(A_R),\star_{A_R})$ is a commutative Fr\'echet algebra which is generated by the single element
$$ w \mapsto f_R(w)=-i \tan \left( \frac{\pi}{2 \log R} \log |w| \right)  \, .$$
Let $$ p(z)=\frac{z-\overline{z}}{1-|z|^2} \, , \qquad z \in \D \, , $$
and consider the universal covering  $\pi : \D \to A_R$, $$\pi(z)=\exp \left( \frac{ 2i\log R}{\pi} \log
  \left( \frac{1+z}{1-z} \right) \right)\,  . $$
Then
$$ f_R \circ \pi=p \, ,$$
and 
(the proof of) Theorem \ref{thm:main} (b) shows that for any $f \in \mathcal{A}(A_R)$ there is a uniquely determined $g=g_f \in \mathcal{H}(\C)$ such that
$$ f \circ \pi=g \circ p \, .$$
By a direct computation involving the explicit formula for $\star_{\D}$ in \cite{KRSW} one can show that
$$ p^n \star_{\D} p=(n+2 \hbar) p^{n+1}-2n \hbar p^{n-1} \, , \qquad n=1,2, \ldots \, .$$
It follows that for each positive integer $n$ the monomials in $p$
$$ z \mapsto p(z)^j \, , \qquad j=0,1\,\ldots, n  $$
and the $\star_{\D}$--monomials in $p$, 
$$ p^{j,*}:=\underbrace{p \star_{\D} \cdots \star_{\D} p}_{j \text{--times}} \, , \qquad j=0,1, \ldots, n   $$
generate the same subspace  of $\mathcal{A}(\D)$. In fact, we have
$$ p^{*,n}=\left( \prod \limits_{j=1}^{n-1} ( j+2 \hbar)\right) \, p^n+\alpha_{n,n+2}(\hbar) p^{n-2}+\alpha_{n,n-4}(\hbar) p^{n-4} +\ldots $$
with universal polynomials $\alpha_{n,j}(\hbar)$ in $\hbar$ of degree $n-1$.

Hence  $\mathcal{A}(A_R)$ is generated by the $\star_{A_R}$--powers of $f_R$, that is, the algebra
$$ \mathcal{P}(A_R):=\left\{ \sum \limits_{j=0}^n a_j \underbrace{f_R \star_{A_R} \cdots \star_{A_R} f_R}_{j \text{--times}}
\, : \, a_j \in \C, \, n=0,1\ldots \right\}
  $$
is dense in $\mathcal{A}(A_R)$.

\medskip

In the same way (using Theorem \ref{thm:main} (c)), one can show that  $\mathcal{A}(\D^*)$ is generated 
by the single element
$$ w \mapsto q(w)=\frac{-1}{\log |w|} \, $$ so that
$$ \mathcal{P}(\D^*):=\left\{ \sum \limits_{j=0}^n a_j \underbrace{q \star_{\D^*} \cdots \star_{\D^*} q}_{j \text{--times}}
\, : \, a_j \in \C, \, n=0,1\ldots \right\}
  $$
  is dense in $\mathcal{A}(\D^*)$. It follows that for each $R>1$ the linear map
  $\Phi_R : \mathcal{P}(A_R) \to \mathcal{P}(\D^*)$ defined by
  $$ \Phi_R(\underbrace{f_R \star_{A_R} \cdots \star_{A_R} f_R}_{j \text{--times}}):= \underbrace{q \star_{\D^*} \cdots \star_{\D^*} q}_{j \text{--times}}$$
is a continuous  algebra isomorphism {(\color{red}{Proof?})}. This would yield  an algebra isomorphism from $\mathcal{A}(A_R)$ onto $\mathcal{A}(\D^*)$.
\medskip

If $\mathcal{A}(X)\si \mathcal{A}(\D^*)$, then 
$\mathcal{A}(X)$ is commutative and does  consist not only of constant functions, so 
$X$ has to be doubly
connected by Theorem \ref{thm:main}. Now assume that $X$ is doubly connected,
then by the Uniformization Theorem $X$ is conformally equivalent either to the annulus $A_R$ for some $R>1$
or to $\D^*$ and hence $\mathcal{A}(X)\si \mathcal{A}(\D^*)$ or $\mathcal{A}(X)\si \mathcal{A}(A_R)$ for some $R>1$ by Theorem \ref{thm:main}.
\end{proof}

\begin{remark} The proof of Corollary \ref{cor:1} (b) above is not completely rigorous.
  Using $$ p^n \star_{\D} p=(n+2 \hbar) p^{n+1}-2n \hbar p^{n-1} \, , \qquad n=1,2, \ldots \, $$
  it is clear that
  $$  (g_1 \circ p) \star_{\D} (g_2 \circ p)=(g_2 \circ p) \star_{\D} (g_1 \circ p)$$
  for all polynomials $g_1,g_2 \in \mathcal{H}(\C)$ and then by continuity of
  the $\star_{\D}$--product for all entire functions $g_1,g_2 \in
  \mathcal{H}(\C)$ \textcolor{red}{Need the star product on $\Omega$?}. Hence the star product $\star_{A_R}$ is commutative.
In order to prove that $\mathcal{A}(A_R)$ and $\mathcal{A}(A_{R'})$ are always
isomorphic, consider
 the universal covering  $\pi_R : \D \to A_R$, $$\pi_R(z)=\exp \left( \frac{ 2i\log R}{\pi} \log
  \left( \frac{1+z}{1-z} \right) \right)\,  . $$
We define a map $\Phi : \mathcal{A}(A_R) \to\mathcal{A}(A_{R'})$ as follows.
For each $f \in \mathcal{A}(A_R)$ there is a unique $g \in \mathcal{H}(\C)$ such that
$f \circ \pi_R=g \circ p$.
By (the proof of) Theorem \ref{thm:main} (b) there is a function $\Phi(f ) \in
\mathcal{A}(A_{R'})$ such that $\Phi(f) \circ \pi_{R'}=g \circ p$. Then $\Phi
: \mathcal{A}(A_R) \to\mathcal{A}(A_{R'})$is an algebra isomorphism.
\end{remark}

{  \color{red}
  \begin{remark}[$\star$--power series?]
    In the proof above that $\mathcal{A}(A_R)$ and $\mathcal{A}(\D^*)$ are isomorphic  there is  a convergence issue to be settled. Presumably, this can be done by looking more carefully at the universal polynomials $\alpha_{n,j}(\hbar)$ and using estimates like
    Lemma 4.4 in \cite{KRSW}. However, perhaps the following is true:
    Let $g \in \mathcal{H}(\C)$ be an entire function with power series expansion
    $$ g(z)=\sum \limits_{j=0}^{\infty} a_j z^j \, .$$
    Show that
    $$ g \circ_{\star} p:=\sum \limits_{j=0}^{\infty} a_j p^{*,j}$$
    converges locally uniformly in $\D$ (then there is a function $f \in \mathcal{A}(A_R)$ such that $f \circ \pi=g \circ_{\star} p$)  and show that for each $f \in \mathcal{A}(A_R)$ there is a (unique) $g \in \mathcal{H}(\C)$ such that
    $$ f \circ \pi = g \circ_{\star} p\, .$$
    (Note that we do know $f \circ \pi=g \circ p$.)
    Do the same for $\mathcal{A}(\D^*)$. Then
    $$\Phi \left( \sum \limits_{j=0}^{\infty} a_j p^{*,j} \right):=\sum \limits_{j=0}^{\infty} a_k q^{\star,j}$$
    defines an algebra isomorphism from $\mathcal{A}(A_R)$ to $\mathcal{A}(\D^*)$.
  \end{remark}
}
}

Daniela Kraus, Oliver Roth, Sebsatian Schlei{\ss}inger, Stefan Waldmann\\
Department of Mathematics\\
University of W\"urzburg\\
Emil Fischer Strasse 40\\
97074 W\"urzburg, Germany

\bigskip

dakraus@mathematik.uni-wuerzburg.de\\
roth@mathematik.uni-wuerzburg.de\\
stefan.waldmann@mathematik.uni-wuerzburg.de\\

\end{document}